\newcommand{\assign}{:=}
\newcommand{\tmdummy}{\ensuremath{\text{}}}
\newcommand{\tmem}[1]{{\em #1\/}}
\newcommand{\tmop}[1]{\ensuremath{\operatorname{#1}}}
\newcommand{\tmtextbf}[1]{{\bfseries{#1}}}
\newcommand{\tmtextit}[1]{{\itshape{#1}}}
\newcommand{\tmtextsc}[1]{{\scshape{#1}}}
\newenvironment{itemizedot}{\begin{itemize}  }{\end{itemize}}
\newenvironment{proof}{\noindent\textbf{Proof\ }}{\hspace*{\fill}$\Box$\medskip}
\newtheorem{theorem}{Theorem}
\newtheorem{definition}[theorem]{Definition}
\newtheorem{lemma}[theorem]{Lemma}
\newtheorem{proposition}[theorem]{Proposition}
\newtheorem{remark}{Remark}
\begin{document}

\title{Controllability and observability of an artificial advection-diffusion
problem }
\author{Pierre Cornilleau\thanks{Teacher at Lyc\'ee du parc des Loges, 1, boulevard des Champs-\'Elys\'ees, 91012 \'Evry, France.\newline e-mail: pierre.cornilleau@ens-lyon.org.}  \& Sergio Guerrero \thanks{Laboratoire Jacques-Louis Lions,
Universit\'e Pierre et Marie Curie, 75252 Paris C\'edex 05, France. \newline e-mail : guerrero@ann.jussieu.fr.}}\maketitle

\begin{abstract}
In this paper we study the controllability of an artificial advection-diffusion system through the boundary.
Suitable  Carleman estimates give us the  observability of the adjoint system in the one dimensional case.
We also study some basic properties of our problem such as backward uniqueness and we get an intuitive result on the control cost for vanishing viscosity.
\end{abstract}

\tableofcontents

{\section*{Introduction}}

\paragraph{Artificial advection-diffusion problem}

In the present paper we deal with an advection-diffusion problem with small viscosity
truncated in one space direction.
 Our interest for the linear advection diffusion equation comes from the Navier-Stokes equations, but it arises also from other fields as,
 for example, meteorology.
For a given viscosity $\varepsilon > 0$, the incompressible Navier-Stokes equations can be written as
\[ \left\{ \begin{array}{c}
     f_t + (f.\nabla) f - \varepsilon \Delta f +\nabla p= 0\\
     \text{div}(f) = 0\\
   \end{array} \right. \]
where $f$ is the velocity vector field, $p$ the pressure, $\nabla$ the gradient and $\Delta$ the usual Laplacian.
Considering the flow around a body, we have that $f$ is almost constant far away from the body and equal to $a$. Then, our system can be approximated by
the Oseen equation
\[ \left\{ \begin{array}{c}
     f_t + (a.\nabla) f - \varepsilon \Delta f +\nabla p= 0\\
     \text{div}(f) = 0\\
   \end{array} \right. \]
(see, for instance, \cite[page 309, (1.2)]{HS}). Consequently, we get the following equation for the vorticity $u=\text{rot}(f)$
\begin{equation*}
u_t+a.\nabla u - \varepsilon \Delta u =0.
\end{equation*}
In the sequel, we assume for simplicity that $a$ is the $n$th unit vector of the canonical basis of  $\mathbb{R}^{n}$.
\par
When one computes the solution of this problem, it can only be solved numerically on a bounded domain. A good way to approximate the solution on
the whole space may be given by the use of artificial boundary conditions (see
{\cite{H,HS}}).

In this paper, we will consider an advection-diffusion system in a strip $\Omega:=\mathbb{R}^{n-1} \times (-L,0)$ ($L$ some positive constant) with particular artificial boundary conditions on both sides of the domain. The following system was considered in \cite[section 6]{H}:
\begin{equation}\label{systemeHalpern} \left\{ \begin{array}{c}
     u_t + \partial_{x_n} u - \varepsilon \Delta u = 0\\
     \varepsilon(u_t + \partial_{\nu} u) = 0\\
     \varepsilon (u_t + \partial_{\nu} u) + u = 0\\
     u (0, .) = u_0
   \end{array} \right. \begin{array}{c}
     \text{in } (0, T) \times \Omega,\\
     \text{on } (0, T) \times \Gamma_0,\\
     \text{on } (0, T) \times \Gamma_1,\\
     \text{in } \Omega,
   \end{array} \end{equation}
where $T>0$, $\Gamma_0 \assign \mathbb{R}^{n - 1} \times \{0\}$, $\Gamma_1
\assign \mathbb{R}^{n - 1} \times \{- L\}$ and we have denoted $\partial_{x_n}$ the partial derivative with respect to $x_n$ and $\partial_{\nu}$ the normal derivative. In \cite[Theorem 3]{H}, the author proved that the solution of \eqref{systemeHalpern} converges in some sense to the restriction of the solution of
$$
\left\{ \begin{array}{c}
     u_t + \partial_{x_n} u  = 0\\
     u = 0\\
     u (0, .) = u_0
   \end{array} \right. \begin{array}{c}
     \text{in } (0, T) \times \mathbb{R}^{n-1}\times \mathbb{R}_-,\\
     \text{on } (0, T) \times \Gamma_1,\\
     \text{in } \mathbb{R}^{n-1}\times \mathbb{R}_-,
   \end{array}
   $$
   to $(0,T)\times\Omega$ (see also section 6 in \cite{H}).

    Up to our knowledge, there is no controllability result concerning system (\ref{systemeHalpern}). In this paper, we will be interested in the uniform boundary controllability of (\ref{systemeHalpern}):
    $$
    \left\{ \begin{array}{c}
     u_t + \partial_{x_n} u - \varepsilon \Delta u = 0\\

     \varepsilon (u_t + \partial_{\nu} u) + u1_{\Gamma_1} = v1_{\Gamma_i}\\
     u (0, .) = u_0
   \end{array} \right. \begin{array}{c}
     \text{in } (0, T) \times \Omega,\\
     \text{on } (0, T) \times \partial\Omega,\\
     \text{in } \Omega,
   \end{array}
   $$
   where $i=0$ or $i=1$.

In the sequel, we shall focus on the one-dimensional problem
\[ (S_v) \left\{ \begin{array}{c}
     u_t + u_x - \varepsilon u_{x x} = 0\\
     \varepsilon(u_t + \partial_{\nu} u) = v\\
     \varepsilon (u_t + \partial_{\nu} u) + u = 0\\
     u (0, .) = u_0
   \end{array} \right. \begin{array}{c}
     \text{in } (0, T) \times (-L,0),\\
     \text{on } (0, T) \times \{0\},\\
     \text{on } (0, T) \times \{-L\},\\
     \text{in } (-L,0).
   \end{array} \]
 We are interested in the so-called \textit{null controllability} of this system
\[ \tmop{for} \tmop{given} u_0, \tmop{find} v \tmop{such} \tmop{that}
   \tmop{the} \tmop{solution} \tmop{of} (S_v) \tmop{satisfies} u (T) \equiv 0. \]
This property is classicaly equivalent to an observability inequality
for the so-called {\it adjoint system} (see Proposition \ref{ObsControl} below for a proof), which can be written as
\[ (S') \left\{ \begin{array}{c}
     \varphi_t + \varphi_x + \varepsilon \varphi_{ x x} = 0\\
     \varepsilon (\varphi_t - \partial_{\nu} \varphi) - \varphi = 0\\
     \varphi_t - \partial_{\nu} \varphi = 0\\
     \varphi (T, .) = \varphi_T
   \end{array} \right. \begin{array}{c}
     \text{in } (0, T) \times (-L,0),\\
     \text{on } (0, T) \times \{ 0\},\\
     \text{on } (0, T) \times \{-L\},\\
     \text{in } (-L,0).
   \end{array} \]
The observability inequality corresponding to the previous controllability property is:
\begin{equation}\label{OI}
\text{ there exists $C>0$ such that } \Vert\varphi(0,.)\Vert_X \leq C \Vert\varphi(.,0)\Vert_{L^{2}(0,T)} \quad \forall \varphi_{T} \in X,
\end{equation}
where the space $X$ will be defined below. We refer the reader unfamiliar with the links between observability and controllability to the seminal paper \cite{DR} (see also \cite{JLL} on hyperbolic systems).
\par

The phenomenon of degeneration of a parabolic problem to a hyperbolic one has been studied in several papers: see, for instance, \cite{CG} or \cite{Glass} (one dimensional heat equation) and \cite{GG} (Burgers equation). Similar results of
interest can also be found in \cite{C}.

\paragraph{Main results}

We define $X$  as the closure of ${\cal D}(\bar{\Omega}):=\mathcal{C}^{\infty} (
\bar{\Omega}$) for the norm
$$\|u\|_X \assign \left( \|u\|^2_{L^2(\Omega)} +
\varepsilon \|u\|^2_{L^2(\partial\Omega)} \right)^{\frac{1}{2}}.$$
 Observe that $X$ is the largest subspace of $L^2(\Omega)$ such that its elements possess a trace in $L^2(\partial\Omega)$. In particular,
$$\forall \epsilon>0, \ H^{1/2+\epsilon}(\Omega)\varsubsetneq X \varsubsetneq L^2(\Omega).$$
\par We will denote by $C_{obs} (\varepsilon)$ the cost of the null-control, which is the
smallest constant $C$ which fulfills the observability estimate \eqref{OI}. Our main result is the following:
\begin{theorem}\label{Seville}
 Assume $T > 0$.
 \begin{itemize}
  \item For any $\varepsilon>0$, there exists $\overline C > 0$ such that the
  solution of problem $(S')$ satisfies (\ref{OI}). Consequently, for every $u_0 \in X$, there exists a control
  $v \in L^2 (0,T)$ with $$\|v\|_{L^2 (0,T)} \leq \overline C\|u_0 \|_X$$ such that the solution of the problem $(S_v)$
  satisfies $u (T) \equiv 0$.
  \item Furthermore, if $T / L$ is large enough, the cost of the null-control $C_{obs} (\varepsilon)$ tends to zero exponentially as
  $\varepsilon \rightarrow 0$:
$$\exists C,k>0 \text{ such that } \quad C_{obs}(\varepsilon)\le C e^{-k/\varepsilon} \quad \forall \varepsilon\in(0,1).$$
 \end{itemize}
\end{theorem}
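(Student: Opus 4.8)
The plan is to establish the two assertions in turn: the first via a global Carleman inequality for the adjoint system $(S')$ (written in time-reversed form), the second by combining that inequality --- with its $\varepsilon$-dependence made explicit --- with a dissipation estimate furnished by the transport term.

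\emph{Observability and null controllability.} I would first reverse time, setting $\psi(t,\cdot)\assign\varphi(T-t,\cdot)$, so that $\psi$ solves the forward problem $\psi_t-\psi_x-\varepsilon\psi_{xx}=0$ in $(0,T)\times(-L,0)$ with the dynamic conditions $\varepsilon(\psi_t+\psi_x)+\psi=0$ on $\Gamma_0$ and $\psi_t-\psi_x=0$ on $\Gamma_1$, and $\psi(0,\cdot)=\varphi_T$; note that $\|\varphi(0,\cdot)\|_X=\|\psi(T,\cdot)\|_X$ and $\|\varphi\|_{L^2((0,T)\times\Gamma_0)}=\|\psi(\cdot,0)\|_{L^2(0,T)}$. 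The core step is a global Carleman estimate for this operator with the observation localized at $x=0$: I would take a weight $\theta(t,x)=e^{\lambda\beta(x)}/(t(T-t))$ with $\beta$ smooth, having no critical point on $[-L,0]$ and monotone, the direction of monotonicity and the size of $\lambda$ being fixed so that, after the usual conjugation and integrations by parts, the contributions coming from the uncontrolled boundary $\Gamma_1$ have a favourable sign (or are dominated by the interior integrals) while those coming from $\Gamma_0$ reduce, up to lower-order terms, to the observed quantity $\int_0^T\theta(t,0)|\psi(t,0)|^2\,dt$. The non-classical feature is that the boundary conditions contain $\psi_t$: one must therefore run a Carleman-type estimate simultaneously for the ``boundary equations'' on $\Gamma_0$ and $\Gamma_1$, treating the traces of $\psi$ there as extra unknowns, and choose the interior and boundary weights compatibly. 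Granting the Carleman estimate, the observability inequality \eqref{OI} follows by the standard argument: localize in time to $(T/4,3T/4)$, use a dissipation/energy estimate for $(S')$ to bound $\|\varphi(0,\cdot)\|_X$ from above by the weighted integral, and absorb the (finite, $\varepsilon$-dependent) weight constants into $\overline C$. Null controllability of $(S_v)$ with the quantitative bound is then the usual HUM/duality consequence of \eqref{OI}.

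\emph{Vanishing-viscosity cost.} Here I would exploit the transport limit: as $\varepsilon\to0$, $(S_v)$ degenerates to $u_t+u_x=0$ with a Dirichlet condition at the inflow boundary $\Gamma_1$, whose solution vanishes for $t>L$ --- so no control is needed in the limit, and the claim is a quantitative version of this for $\varepsilon>0$. I would split $[0,T]=[0,T_1]\cup[T_1,T]$ with $L<T_1<T$, both intervals long. On $[0,T_1]$ let the system evolve with $v=0$ and prove a dissipation estimate $\|u(T_1,\cdot)\|_X\le e^{-c/\varepsilon}\|u_0\|_X$, with $c=c(T_1-L,L)>0$, by testing the equation against the multiplier $e^{-\mu(x)/\varepsilon}u$ for a suitable monotone $\mu$ with $0<\mu'<1$ and using the dynamic boundary conditions to close the estimate on a weighted version of the energy $\|\cdot\|_X^2$: the drift then produces a coercive zeroth-order term of size $\varepsilon^{-1}$, which is the mechanism by which the hyperbolic transport --- not the weak parabolic smoothing --- forces genuine exponential decay, and $c>0$ holds as soon as $T_1$ is large enough relative to $L$. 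On $[T_1,T]$ apply the first part to drive $u(T_1,\cdot)$ to rest at time $T$ with cost $C_{obs}(\varepsilon,T-T_1)$, so that the total control satisfies $\|v\|_{L^2}\le C_{obs}(\varepsilon,T-T_1)\,e^{-c/\varepsilon}\|u_0\|_X$. It then remains to bound $C_{obs}(\varepsilon,T-T_1)\le e^{K/\varepsilon}$ with $K=K(T-T_1,L)$ \emph{explicit} --- which amounts to rerunning the Carleman estimate of the first part while tracking the $\varepsilon$-dependence of $\lambda$, of the blow-up of $\theta$ near $t=0,T$, and of all absorbed constants, in the spirit of \cite{CG} --- and then to check that, when $T/L-1$ is large enough, $T_1$ can be chosen with $c(T_1-L,L)>K(T-T_1,L)$, so that the bound above decays exponentially in $1/\varepsilon$.

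\emph{Where the difficulty lies.} The principal obstacle is the Carleman estimate itself in the presence of the dynamic (Wentzell-type) boundary conditions: unlike the Dirichlet or Neumann situations, the boundary terms involve $\psi_t$, so the estimate cannot be closed in the interior alone, and one must carry along an estimate on the lateral boundary while verifying that the single monotone weight $\beta$ can be tuned so that \emph{every} contribution on $\Gamma_1$ has the right sign and those on $\Gamma_0$ collapse to the observed term --- the admissible geometry is what ultimately constrains the result. The secondary, quantitative obstacle is to make the $\varepsilon$-dependence of $C_{obs}(\varepsilon)$ sharp enough to win the competition with the dissipation rate, which forces an $\varepsilon$-uniform book-keeping of the Carleman parameters and pins down how large $T/L$ must be.
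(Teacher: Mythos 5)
Your overall architecture coincides with the paper's: a global Carleman estimate for the adjoint with a monotone spatial weight chosen so that the $\Gamma_1$ boundary terms come out with a favourable sign, an exponential-multiplier dissipation estimate exploiting the drift (the paper tests against $\rho\varphi$ with $\rho=\exp(rx/\varepsilon)$ and optimizes $r=(t_2-t_1-L)/(2(t_2-t_1))$, which is exactly your $e^{-\mu(x)/\varepsilon}$ device), and the HUM/duality equivalence between \eqref{OI} and null controllability. Your organization of the second bullet differs superficially: you let the direct system decay freely on $[0,T_1]$ and then control on $[T_1,T]$, whereas the paper applies the dissipation estimate (Proposition~\ref{dissi}) to the adjoint between $t_1=0$ and $t_2=t\in(T/4,3T/4)$ and feeds it into the left-hand side of the Carleman inequality. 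The two are equivalent by duality and lead to the same kind of condition on $T/L$; the paper's version avoids restating observability on a subinterval, while yours makes the transport heuristic more transparent.

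The genuine gap is in the Carleman estimate itself, precisely at the point you flag as the difficulty but leave unresolved. Running the Fursikov--Imanuvilov computation with the dynamic boundary conditions leaves on the right-hand side a term of the form $(s+\varepsilon)\int_{\sigma_0}\tilde\phi\, e^{-2s\tilde\alpha}|\tilde\varphi_t|^2$ (see Proposition~\ref{Carleman classique}); this is \emph{not} of the observed form and cannot be absorbed by the left-hand side, which contains no trace of $\varphi_t$. ``Treating the traces as extra unknowns'' does not by itself close this. The paper's resolution is specific: integrate by parts in time (see \eqref{France98}) to trade $|\tilde\varphi_t|^2$ for $|\tilde\varphi|\,|\tilde\varphi_{tt}|$, and then bound $\tilde\varphi_{tt}$ on the boundary by noting that $\zeta\assign e^{-s\tilde\alpha(\cdot,-L)}\tilde\phi^{-5/2}(\cdot,-L)\,\tilde\varphi_t$ solves the same adjoint system with source terms proportional to $\tilde\varphi_t$, to which the maximal-regularity estimates of Lemma~\ref{RE} (via Remark~\ref{remarqueH2}) apply; this yields \eqref{final} and is the reason the observation term in Theorem~\ref{The:Carleman} carries the nonstandard weight $s^9\phi^9e^{-4s\alpha+2s\alpha(\cdot,-L)}$ rather than $s^3\phi^3e^{-2s\alpha}$. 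Two further quantitative choices that you leave open are in fact forced: the weight must blow up like $1/(\varepsilon^2t(T-t))$ (equivalently one rescales time by $\tilde T=\varepsilon T$), and the admissible parameter range is $s\gtrsim\varepsilon T+\varepsilon^2T^2$; without this scaling the observability constant is not of the form $e^{c'/\varepsilon}$ and the competition with the dissipation rate $\exp(-(T-4L)^2/(8\varepsilon T))$ in the final step cannot be won.
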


\begin{remark}
One can in fact obtain an observability result for the adjoint system at $x=-L$, that is
\begin{equation*}
\| \varphi (0,.)\|_X \leq C\| \varphi(.,-L) \|_{L^2(0, T)},
\end{equation*}
for any $\varphi$ solution of $(S')$. This provides some controllability result for the direct system on $\Gamma_1$: a function $v$ can be found depending continuously on $u_0$ so that the solution of
\[ \left\{ \begin{array}{c}
     u_t + u_x - \varepsilon u_{x x} = 0\\
     u_t + \partial_{\nu} u =0 \\
     \varepsilon (u_t + \partial_{\nu} u) + u = v\\
     u (0, .) = u_0
   \end{array} \right. \begin{array}{c}
     \text{in } (0, T) \times (-L,0),\\
     \text{on } (0, T) \times \{0\},\\
     \text{on } (0, T) \times \{-L\},\\
     \text{in } (-L,0),
   \end{array}  \]
   satisfies $u(T,.) \equiv 0$. Despite the fact that it is more physical to control our system at $\Gamma_1$,  we have chosen to present here the result for the system $(S_v)$ since its proof, less obvious, requires more computations.
\end{remark}

\begin{remark}
  The fact that the control cost vanishes tells intuitively that the
  state is almost null for $T / L$ big enough. This is to be connected
  with the fact that, for $\varepsilon = 0$, the system is purely advective
  and then that, for $T > L$, its state is null.
\end{remark}

In some context of inverse problems (be able to know the origin of a polluted
river for instance), it can be interesting to know if the observation of the
solution of the direct problem $(S_0)^n$ on the boundary part $\Gamma_1$ or $\Gamma_0$ can allow us to
recover the initial data. The corresponding result is presented now and will be proved at the end of the first section.

\begin{proposition}\label{OBS}
  Let $T > 0$ and $\varepsilon>0$. If the solution of $(S_0)$ with initial data $u_0 \in X$
  satisfies $u = 0$ on $(0, T) \times \Gamma_{1}$, then $u_0 \equiv 0$. However, there is
  no constant $C > 0$ such that the following estimate holds
  \begin{equation}\label{FE}
   \|u_0 \|_X \leq C\|u(.,-L)\|_{L^2 (0,T)}\quad\forall u_0 \in X.
  \end{equation}
\end{proposition}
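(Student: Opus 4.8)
The plan is to treat the two assertions separately. For the uniqueness statement, I would argue by a backward-uniqueness/unique-continuation argument adapted to the boundary conditions of $(S_0)$. First I would observe that $u=0$ on $(0,T)\times\Gamma_1$ together with the Robin-type condition $\varepsilon(u_t+\partial_\nu u)+u=0$ there forces $u_t+\partial_\nu u=0$ on $(0,T)\times\Gamma_1$, hence both $u$ and its normal derivative $\partial_\nu u=\partial_x u$ vanish on $\Gamma_1$ (using $u_t=0$ on $\Gamma_1$). Thus $u$ solves the parabolic equation $u_t+\partial_x u-\varepsilon u_{xx}=0$ in $(0,T)\times(-L,0)$ with full Cauchy data vanishing on the lateral boundary $\{x=-L\}$. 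A standard unique continuation result for parabolic operators (for instance via a local Carleman estimate across $\Gamma_1$, or by extending $u$ by zero beyond $x=-L$ and applying Holmgren-type arguments after noting analyticity in the $x$-variable) then yields $u\equiv 0$ on $(0,T)\times\Omega$, and in particular $u_0\equiv 0$. One should double-check that the regularity of solutions of $(S_0)$ with $u_0\in X$ is enough to make the trace manipulations and the Carleman estimate rigorous; if not, one runs the argument for smooth data and concludes by density.

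For the negative statement — that no observability constant exists — the plan is to exhibit a sequence $(u_0^k)$ with $\|u_0^k\|_X$ bounded below but $\|u^k\|_{L^2((0,T)\times\Gamma_1)}\to 0$. The natural candidates are high-frequency concentrations: since $n=1$ here $\Gamma_1$ is a single point, so $\|u^k\|_{L^2((0,T)\times\Gamma_1)}=\|u^k(\cdot,-L)\|_{L^2(0,T)}$, and I would look for data whose evolution stays ``away'' from $x=-L$ for as long as possible. One clean choice is to use the adjoint/dual picture already set up in the paper: the failure of \eqref{FE} is equivalent to the failure of an a~priori estimate for the adjoint problem $(S')$ observed from the interior data at time $0$, so I could instead directly build a sequence of solutions of the relevant backward problem concentrating near $\Gamma_0=\{x=0\}$. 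Concretely, take $u_0^k$ to be a normalized bump (in the $X$-norm) supported near $x=0$ with spatial frequency $k$; by finite speed of propagation for the limiting transport part combined with the smoothing/decay of the parabolic part, the solution restricted to $x=-L$ has $L^2(0,T)$-norm that decays (polynomially from the support separation at small times, and one controls the tail using parabolic energy estimates). Letting $k\to\infty$ kills the $\Gamma_1$-observation while the $X$-norm of the datum is fixed equal to one, contradicting \eqref{FE}.

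The main obstacle I expect is the second part: making the decay of $\|u^k(\cdot,-L)\|_{L^2(0,T)}$ quantitative and genuinely tending to zero, rather than merely bounded. The subtlety is that the parabolic operator instantly spreads mass to $x=-L$, so a naive finite-speed-of-propagation argument fails for $\varepsilon>0$ fixed; one must quantify how small that spread is. I would handle this by a weighted energy estimate with an exponential weight $e^{\lambda(x+L)}$ (or a Gaussian-type barrier $e^{-(x+L)^2/(4\varepsilon t)}$ comparison), showing that if $u_0^k$ is supported in $(-\delta,0)$ then $\int_0^T|u^k(t,-L)|^2\,dt \le C e^{-c(L-\delta)^2/\varepsilon}\|u_0^k\|_X^2$ or similar; then choosing $\delta=\delta_k\to 0$ slowly gives the required decay. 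An alternative, perhaps cleaner, route is the spectral/duality one: diagonalize the (non-self-adjoint but sectorial) operator governing $(S_0)$, note that the trace map to $\Gamma_1$ annihilates or exponentially damps the high modes, and read off the absence of a uniform lower bound directly. Either way, once the quantitative smallness of the $\Gamma_1$-trace is in hand, the contradiction with \eqref{FE} is immediate, and the contrast with the first part — qualitative uniqueness holds, quantitative observability fails — is exactly the expected ``lack of Carleman weight near the characteristic boundary $\Gamma_1$'' phenomenon.
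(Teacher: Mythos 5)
Your proof of the first assertion takes a genuinely different route from the paper's. You turn $u=0$ on $(0,T)\times\Gamma_1$ plus the dynamic boundary condition into vanishing Cauchy data ($u=\partial_\nu u=0$ on $\{x=-L\}$) and invoke lateral unique continuation for the parabolic operator; the paper instead applies the $\Gamma_1$-observability inequality coming from the Carleman estimate of Remark~\ref{Betis} to conclude $u(T,\cdot)=0$, and then the backward uniqueness of Proposition~\ref{BU} (Lions--Malgrange). Both routes are viable; yours is more local and avoids backward uniqueness, but note that your closing density remark does not repair the regularity issue (approximating $u_0$ destroys the hypothesis $u|_{\Gamma_1}=0$) --- what you actually need is the regularization of Lemma~\ref{Reg}, which lets you run the argument on $(\delta,T)$ and let $\delta\to0$.

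For the second assertion there is a genuine gap. The quantitative mechanism you actually write down --- data supported in $(-\delta_k,0)$, a Gaussian or exponential barrier, and the bound $\int_0^T|u^k(t,-L)|^2\,dt\le C e^{-c(L-\delta_k)^2/\varepsilon}\|u_0^k\|_X^2$ with $\delta_k\to0$ --- does not produce a vanishing observation: for fixed $\varepsilon$ and $L$ the right-hand side converges to the positive constant $Ce^{-cL^2/\varepsilon}$, so you only obtain boundedness of $\|u^k\|_{L^2((0,T)\times\Gamma_1)}$, which is no contradiction with \eqref{FE}. The decay must come from the frequency parameter $k$, i.e.\ from showing that the trace at $x=-L$ is controlled by a norm of $u_0$ strictly weaker than $\|\cdot\|_X$ on oscillating data; you mention this (''the trace map damps the high modes'') but do not establish it, and a spectral shortcut is delicate because the operator with these dynamic boundary conditions is not self-adjoint. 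This is precisely the point the paper makes rigorous without any explicit construction: it defines solutions by transposition for $u_0$ in the negative-order space $X^{-s}$ with $\|u\|_{L^2((0,T)\times\Omega)}\lesssim\|u_0\|_{X^{-s}}$, interpolates with the $H^1$ estimate of Lemma~\ref{RE} to get $u\in L^2((0,T),H^{\theta}(\Omega))$ for some $\theta>1/2$, applies the trace theorem to obtain $\|u\|_{L^2((0,T)\times\partial\Omega)}\lesssim\|u_0\|_{X^{-s(1-\theta)}}$, and observes that \eqref{FE} would then force the false embedding $X^{-s(1-\theta)}\hookrightarrow X$. To salvage your constructive approach you would need to prove a quantitative high-frequency damping estimate valid under these boundary conditions (for instance by combining a short-time barrier on $(0,t_0(k))$ with mode decay on $(t_0(k),T)$); as written, that key step is missing.
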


\begin{remark}
Using Remark 1, we can also obtain a similar result at $x=0$.
\end{remark}

The rest of the article is organized as follows: in the first section, we show
the well-posedness of  the direct and the adjoint problems using some semi-groups
approach. In the second section, we adapt a Carleman inequality to the case of our one-dimensional problem. The third section is intended to explain how to get observability in the one-dimensional case and the equivalence between observability and controllability.

\par $\ $

\par \textbf{Notations:}
\par \noindent $A \lesssim B$ means that, for some universal constant $c>0$, $A\leq c B.$
\par \noindent $A \sim B$ means that, for some universal constant $c>1$, $c^{-1} B \leq A \leq c B.$

\section{Well-posedness and basic properties of systems}
In this section, we work in dimension $n$.
\subsection{Homogeneous problems}

We will use some semi-group results to show existence and uniqueness of the
homogeneous direct problem (that is $(S_0)^n$). This will enable to define
solutions of the system $(S_0)^n$ as a semigroup value.
We define $ H = (X, \|. \|_X), V = H^1 (\Omega)$
endowed with the usual norm $\|.\Vert$ and we consider the
 bilinear form on $V$ defined by
\begin{equation}\label{a-def}
 a (u_1, u_2) = \varepsilon\int_{\Omega} \nabla u_1. \nabla u_2+\int_{\Omega} \partial_{x_n} u_1 u_2 +\int_{\Gamma_1} u_1 u_2 .
\end{equation}
With the help of this bilinear form, one may now consider the space
\begin{equation*}
{\cal D}:=\left\{u_1\in X; \sup_{u_2 \in {\cal D}(\overline{\Omega}); \ ||u_2||_X\le 1} |a(u_1,u_2)|<+\infty \right\}
\end{equation*}
 equipped with the natural norm
$$\| u_1\| _{\mathcal{D}}=\| u_1\|_X+\sup_{u_2 \in {\cal D}(\overline{\Omega}); \ ||u_2||_X\le 1} |a(u_1,u_2)|.$$
Note that, using an integration by parts, one shows that $a(u_1,u_2)$ is well-defined for $u_1\in X$ and $u_2\in {\cal C}^\infty(\overline{\Omega})$ and that the map
$$ u_2 \in X\mapsto a(u_1,u_2) \in \mathbb{R}$$
is well-defined and continuous for any $u_1 \in {\cal D}$.
\newline \noindent Using the Riesz representation theorem, we can define an operator ${\cal A}$ with domain ${\cal D(A)}={\cal D}$ and such that
$$\forall u_1 \in {\cal D(A)}, \ \forall u_2\in X, \quad <-{\cal A}u_1, u_2>_X=a(u_1,u_2).$$
$(S_0)^n$ might now be written in the following
abstract way
\[ \left\{ \begin{array}{c}
    u_t = \mathcal{A} u,\\
     u(0, .) = u_0,
   \end{array} \right.
 \]
since, for $u\in {\cal C}(\mathbb{R}^+, {\cal D(A)}) \cap {\cal C}^1(\mathbb{R}^+, X)$ a classical solution of $(S_0)^n$, we directly get that
$$\forall v\in X, \quad <u_t,v>_X=-a(u,v).$$
\begin{proposition}
Let $\varepsilon>0$. Then $\mathcal{A}$ generates a continuous semi-group $(e^{t\mathcal{A}})_{t\geq 0}$ on $X$.
\end{proposition}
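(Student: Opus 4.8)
The plan is to apply the Lumer–Phillips theorem in its analytic-semigroup form. I would first check that the bilinear form $a$ is continuous on $V\times V$ and that the operator $-\mathcal{A}$, viewed through the triple $V \hookrightarrow H \hookrightarrow V'$, is associated with a form that is \emph{sectorial} — i.e. satisfies a Gårding-type inequality $\mathrm{Re}\, a(u,u) + \lambda \|u\|_X^2 \gtrsim \|u\|_V^2$ together with the continuity estimate $|a(u,v)| \lesssim \|u\|_V \|v\|_V$. Continuity is immediate from Cauchy–Schwarz applied term by term in \eqref{a-def}, using the trace theorem for the boundary integral over $\Gamma_1$.

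The crux is the coercivity (accretivity up to a shift) estimate. Taking $v = u$ in \eqref{a-def} gives
\[
a(u,u) = \varepsilon \int_\Omega |\nabla u|^2 + \int_\Omega \partial_n u\, u + \int_{\Gamma_1} u^2 .
\]
The middle term is the troublesome one: $\int_\Omega \partial_n u\, u = \tfrac12 \int_\Omega \partial_n(u^2) = \tfrac12\int_{\Gamma_0} u^2 - \tfrac12 \int_{\Gamma_1} u^2$ by the divergence theorem (the outward normal being $+e_n$ on $\Gamma_0$ and $-e_n$ on $\Gamma_1$). Hence
\[
a(u,u) = \varepsilon\int_\Omega |\nabla u|^2 + \tfrac12\int_{\Gamma_0} u^2 + \tfrac12\int_{\Gamma_1} u^2 \;\geq\; \varepsilon \int_\Omega|\nabla u|^2 \;\geq\; 0 ,
\]
so in fact $\mathcal{A}$ is dissipative on $X$ with no shift needed, and adding $\|u\|_X^2 = \|u\|_{L^2(\Omega)}^2 + \varepsilon\|u\|_{L^2(\partial\Omega)}^2$ to both sides yields $a(u,u) + \|u\|_X^2 \gtrsim \varepsilon\|u\|_{H^1(\Omega)}^2 + \varepsilon\|u\|_{L^2(\partial\Omega)}^2$, which controls a natural $\varepsilon$-weighted $V$-norm; this is the Gårding inequality. (One should be slightly careful that the surface terms make sense: for $u\in V=H^1(\Omega)$ the traces on $\Gamma_0,\Gamma_1$ lie in $H^{1/2}$, so all integrals above are well defined, and the computation extends from $\mathcal{D}(\bar\Omega)$ by density.)

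Next I would verify the range condition: for $\lambda>0$ (indeed any $\lambda\geq 0$ here, but $\lambda>0$ suffices) and $f\in X$, the variational problem $\lambda\langle u,v\rangle_X + a(u,v) = \langle f,v\rangle_X$ for all $v\in V$ has a unique solution $u\in V$, by Lax–Milgram using the coercivity just established; one then checks that this $u$ lies in $\mathcal{D}(\mathcal{A})$ as defined in the excerpt and solves $(\lambda - \mathcal{A})u = f$, so $\lambda - \mathcal{A}$ is surjective and $\mathcal{A}$ is m-dissipative, hence generates a $C_0$-semigroup of contractions (Lumer–Phillips). To upgrade to analyticity, I would show the numerical range of $\mathcal{A}$ lies in a sector: estimate $|\mathrm{Im}\, a(u,u)|$ — but here $a(u,u)$ is real, so the relevant bound is instead that $|a(u,v)|$ is controlled by the form norm, i.e. one shows $\mathcal{A}$ is a variational (sectorial) operator in the sense of Kato/Tanabe, whence $e^{t\mathcal{A}}$ is analytic. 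Concretely, the standard criterion is: $a$ continuous and coercive on $V$ with $V\hookrightarrow H$ dense and continuous $\Rightarrow$ $-\mathcal{A}$ generates an analytic semigroup on $H$; I would cite this (e.g. from the theory of sesquilinear forms) and conclude.

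\emph{Expected main obstacle.} The only genuinely delicate point is the identification of the form-theoretic operator with the operator $\mathcal{A}$ whose domain is specified in the excerpt, together with checking that the boundary conditions of $(S_0)$ — in particular the dynamic/Robin conditions $\varepsilon(u_t+\partial_\nu u)=0$ on $\Gamma_0$ and $\varepsilon(u_t+\partial_\nu u)+u=0$ on $\Gamma_1$ — are correctly encoded by the boundary terms of $a$ and by the $X$-inner product (whose boundary part carries the weight $\varepsilon$). This is a matter of integrating by parts in $a(u,v)= \varepsilon\int_\Omega \nabla u\cdot\nabla v + \int_\Omega \partial_n u\, v + \int_{\Gamma_1} uv$ and reading off, for $u\in\mathcal{D}(\mathcal{A})$, that $-\mathcal{A}u = \varepsilon\Delta u - \partial_n u$ in $\Omega$ while the boundary contributions force exactly the stated traces; the weight $\varepsilon$ on $\partial\Omega$ in $\|\cdot\|_X$ is what converts $\varepsilon\partial_\nu u$ into the "$\varepsilon u_t$" term of the evolution. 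Everything else is routine functional analysis.
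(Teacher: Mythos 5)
Your proof is correct and follows essentially the same route as the paper: the same integration by parts yielding $a(u,u)=\varepsilon\|\nabla u\|_{L^2(\Omega)}^2+\tfrac12\|u\|_{L^2(\partial\Omega)}^2\ge 0$ (equivalently, monotonicity of $-\mathcal{A}$) and the same Lax--Milgram argument for the range condition, so the two arguments differ only in packaging (Lumer--Phillips versus Hille--Yosida for an m-dissipative operator). If anything you are more complete than the paper, which stops at maximal monotonicity and never addresses analyticity; you make explicit the missing ingredient, namely that the operator associated with a continuous, G{\aa}rding-coercive form on $V\hookrightarrow H$ generates an analytic semigroup.
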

\begin{proof}
Using the Hille-Yoshida theorem, it is sufficient to show that $\mathcal{A}$ is a maximal monotone operator.
\begin{itemize}
  \item First, Green-Riemann formula gives
  \begin{equation*}
  <\mathcal{A}u,u>_X=-\varepsilon\int_{\Omega} |\nabla u|^2-\frac{1}{2}\int_{\partial\Omega}|u|^2,
  \end{equation*}
  so the monotonicity is proved.
  \item Given $v \in X$, we have to solve $(I-\mathcal{A})u=v$, that is to find $u\in \mathcal{D} (
\mathcal{A})$ such that
$$ \forall u'\in X, <(I-\mathcal{A})u,u'>_X=<v,u'>_X.$$
The left-hand side term of this equation is a continuous bilinear form $B$ on the space $V$ while the right-hand side term is a continuous linear form $L$ on $V$. Moreover, using \eqref{a-def}, we can easily compute
$$ B(u,u)=\Vert u\Vert_X^2+\varepsilon \int_\Omega \vert\nabla u \vert^2+\frac{1}{2}\int_{\partial\Omega} \vert u \vert^2\geq \min\{1,\varepsilon\} \Vert u\Vert^2,$$
which show that $B$ is coercive on $V$.
 Consequently, Lax-Milgram theorem shows that there exists $u\in V$ such that $B(u,u')=L(u'), \ \forall u'\in V $. Using test functions $u'\in \mathcal{D} (
\Omega)$ additionaly gives $u\in \mathcal{D} (\mathcal{A})$ and our proof ends.
\end{itemize}
\end{proof}

\noindent In particular, for every initial data $u_0 \in
X$, we have existence and uniqueness of a solution $u \in \mathcal{C} ( \mathbb{R}^+, X)$ to $(S_0)^n$. We will
call these solutions  \tmem{weak solutions} opposed to \tmem{strong
solutions} \ i.e. such that $u_0 \in \mathcal{D(A)}$ and which fulfill $
u \in \mathcal{C} ( \mathbb{R}^+, \mathcal{D(A)}) \cap \mathcal{C}^1 (\mathbb{R}^+, X)$. One can notice that, using the density of $\mathcal{D(A)}$ in $X$, a weak
solution can always be approximated by a strong solution.

Let us introduce the adjoint system in dimension $n$
\[ (S')^n \left\{ \begin{array}{c}
     \varphi_t + \partial_{x_n} \varphi + \varepsilon \Delta \varphi = 0\\
     \varepsilon (\varphi_t - \partial_{\nu} \varphi) - \varphi = 0\\
     \varphi_t - \partial_{\nu} \varphi = 0\\
     \varphi (T, .) = \varphi_T
   \end{array} \right. \begin{array}{c}
     \text{in } (0, T) \times \Omega,\\
     \text{on } (0, T) \times \Gamma_0,\\
     \text{on } (0, T) \times \Gamma_1,\\
     \text{in } \Omega.
   \end{array} \]
Defining the adjoint ${\cal A}^*$ of ${\cal A}$, we can show in a similar way as before that $(S')^n$ may be written in the following abstract way
\[ \left\{ \begin{array}{c}
    \varphi_t + \mathcal{A^*} \varphi=0,\\
     \varphi(T, .) = \varphi_T.
   \end{array} \right.
\]
The adjoint operator $\mathcal{A^{\ast}}$ is also a maximal monotone operator with domain $\mathcal{D(A^*)}$. Thus, for every initial data $\varphi_T \in
X$, we have existence and uniqueness  of a solution $\varphi \in \mathcal{C} ( \mathbb{R}^+, X)$ to $(S')^n$ given by means of the backward semigroup $(e^{(T-t)\mathcal{A^*}})_{t\geq 0}$. We will also speak of \tmem{weak solutions} or \tmem{strong
solutions} in this situation.

\subsection{Nonhomogeneous direct problems}

We consider a slightly more general system
\[(S_{f,g_0,g_1})^n \left\{ \begin{array}{c}
     u_t + \partial_{x_n} u - \varepsilon \Delta u = f\\
     \varepsilon(u_t + \partial_{\nu} u) = g_0\\
     \varepsilon (u_t + \partial_{\nu} u) + u = g_1\\
     u (0, .) = u_0
   \end{array} \right. \begin{array}{c}
     \text{in } (0, T) \times \Omega,\\
     \text{on } (0, T) \times \Gamma_0,\\
     \text{on } (0, T) \times \Gamma_1,\\
     \text{in } \Omega,
   \end{array}  \]
with $f\in L^2((0,T)\times\Omega)$, $g_0\in L^2((0,T)\times\Gamma_0)$ and $g_1\in L^2((0,T)\times\Gamma_1)$. We consider a function $g$ on $\partial\Omega$ such that $g=g_i$ on $\Gamma_i$ ($i=0,1$).
\begin{definition}\label{Ladefinition}
  We say that $u \in \mathcal{C} ( \left[ 0, T \right], X)$ is a {\it transposition} solution of
  $(S_{f,g_0,g_1})^n$ if, for every function $\varphi \in \mathcal{C} ([0, T], {\cal D(A^*)}) \cap \mathcal{C}^1 ([0, T ], X)$, the following
  identity holds
  \[ \int_0^{\tau} \left( <u, \varphi_t>_X+<u,{\cal A^*} \varphi>_X + <F,\varphi>_X\right) = \left[ <u(t),\varphi(t)>_X\right]_{t=0}^{t=\tau}
  \quad \forall \tau\in [0,T],
  \]
where we have defined, using the Riesz representation theorem, $F(t)\in X$ such that
\begin{equation}\label{defF}
<F(t),u'>_X=\int_{\Omega} f(t)u'+\int_{\partial\Omega} g(t)u', \quad \forall u'\in X.
\end{equation}
\end{definition}

\begin{proposition}\label{NH}
  Let $T > 0$, $u_0 \in X$, $f\in L^2((0,T)\times\Omega)$, $g_0\in L^2((0,T)\times\Gamma_0)$ and $g_1\in L^2((0,T)\times\Gamma_1)$. Then
  $(S_{f,g_0,g_1})^n$ possesses a unique solution $u$.
\end{proposition}

\begin{proof}

Let us first assume that $u$ belongs to  $u\in \mathcal{C} ([0, T], {\cal D(A)}) \cap \mathcal{C}^1 ([0, T ], X)$. Then, it is easy to prove that $u$ is a solution to $(S_{f,g_0,g_1})^n$ if and only if, for any $\phi \in {\cal D(A^*)}$,
$$<u_t, \phi>_X=<u,{\cal A^*} \phi>_X + <F,\phi>_X \text{ in } [0,T].$$
Consequently, thanks to Duhamel formula, $u$ is a solution of $(S_{f,g_0,g_1})^n$ is equivalent to
\[  u(t) = e^{t \mathcal{A}}u_0 + \int_0^t e^{(t - s) \mathcal{A}} F (s)ds, \quad \forall t\in[0,T],\]
since ${\cal D}({\cal A}^*)$ is dense in $X$.


In order to prove the existence of a solution $u\in {\cal C}([0,T],X)$, we approximate $(u_0,f,g)$ by
$$
(u_0^p,f^p,g_0^p,g_1^p)\in {\cal D}({\cal A})\times {\cal C}([0,T],L^2(\Omega))\times {\cal C}([0,T],L^2(\Gamma_0))\times {\cal C}([0,T],L^2(\Gamma_1))
$$
in
$$
X\times L^2((0,T),L^2(\Omega))\times L^2((0,T),L^2(\Gamma_0))\times L^2((0,T),L^2(\Gamma_1)).
$$
Then, we know that $u^p$ given by
$$
u^p(t) = e^{t \mathcal{A}}u_0^p + \int_0^t e^{(t - s) \mathcal{A}} F^p (s)ds, \quad \forall t\in[0,T],
$$
where $F^p$ is related to $(f^p,g_0^p,g_1^p)$ by formula (\ref{defF}), is the solution of $(S_{f^p,g_0^p,g_1^p})^n$. Passing to the limit in this identity, we get that $u^p\rightarrow u$ in ${\cal C}([0,T],X)$ with
$$
u(t): = e^{t \mathcal{A}}u_0 + \int_0^t e^{(t - s) \mathcal{A}} F (s)ds,\quad \forall t\in[0,T].
$$
Furthermore, passing to the limit in Definition~\ref{Ladefinition} for $u^p$, we obtain that $u$ is a solution of $(S_{f,g_0,g_1})^n$. Moreover, this argument also proves the uniqueness of the solution.
\end{proof}

We now focus on some regularization effect for our general system and for some technical reasons we want to have some explicit dependence of the bounds on $\varepsilon$. For this result, it is essential that $\varepsilon>0$.

\begin{lemma}\label{RE}
Let $\varepsilon\in (0,1)$, $f\in L^2((0,T)\times\Omega)$, $g_0\in L^2((0,T)\times\Gamma_0)$ and $g_1\in L^2((0,T)\times\Gamma_1)$.
\begin{itemize}
\item Let $u_0\in X$.  Then if $u$ is the solution of
$(S_{f,g_0,g_1})^n$, $u$ belongs to $L^2((0,T),H^1(\Omega))\cap {\cal C}([0,T],X)$ and
\begin{equation}\label{L2H1}
\varepsilon^{1/2}\|u\|_{L^2((0,T),H^1(\Omega))}+\|u\|_{L^{\infty}((0,T),X)} \leq C(\|u_0\|_{X}+\|f\|_{L^2((0,T)\times\Omega)}+\|g\|_{L^2((0,T)\times\partial\Omega)})
\end{equation}
for some $C>0$ only depending on $T$.
\item Let now $u_0\in H^1(\Omega)$. Then $u\in L^2((0,T),\mathcal{D(A)})\cap {\cal C}([0,T],H^1(\Omega))$, $u_t \in L^2((0,T),X)$ and
\begin{equation}\label{L2H2}
\begin{array}{l}\displaystyle
\varepsilon^{1/2}\|u\|_{L^\infty([0,T],H^1(\Omega))}+\varepsilon(\|\Delta u\|_{L^2((0,T) \times \Omega)}+\| \partial_\nu u \|_{L^2((0,T)\times \partial \Omega)})+\|u_t\|_{L^2([0,T],X)}
\\ \noalign{\medskip}\displaystyle
\leq \frac{C}{\varepsilon^{1/2}}(\|u_0\|_{H^1(\Omega)}+\|f\|_{L^2((0,T)\times\Omega)}+\|g\|_{L^2((0,T)\times\partial\Omega)}),
\end{array}
\end{equation}
where $C>0$ only depends on $T$.
\end{itemize}
\end{lemma}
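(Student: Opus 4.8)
The plan is to establish both estimates first for strong solutions and then to pass to the stated generality by density: the smooth functions on $\bar\Omega$ are dense in $X$ and in $H^1(\Omega)$ and all belong to $\mathcal{D}(\mathcal{A})$, while smooth $f,g$ are dense in the corresponding $L^2$ spaces; by Proposition~\ref{NH} the resulting solution is then strong, so the energy computations below are legitimate and the final bounds survive the limit. Throughout I write $(S_{f,g_0,g_1})$ abstractly as $\dot u=\mathcal{A}u+F(t)$, with $F(t)\in X$ the Riesz representative of $u'\mapsto\int_\Omega f(t)u'+\int_{\partial\Omega}g(t)u'$, as in the proof of Proposition~\ref{NH}.

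For the first item I would pair the equation with $u$ in $X$ and use the identity $\langle\mathcal{A}u,u\rangle_X=-\varepsilon\int_\Omega|\nabla u|^2-\tfrac12\int_{\partial\Omega}|u|^2$ recalled above, which gives
$$\frac12\frac{d}{dt}\|u\|_X^2+\varepsilon\int_\Omega|\nabla u|^2+\frac12\int_{\partial\Omega}|u|^2=\int_\Omega fu+\int_{\partial\Omega}gu.$$
Young's inequality bounds the right-hand side by $\tfrac12\|u\|_{L^2(\Omega)}^2+\tfrac12\|f\|_{L^2(\Omega)}^2+\tfrac14\int_{\partial\Omega}|u|^2+\|g\|_{L^2(\partial\Omega)}^2$; the crucial point is that no negative power of $\varepsilon$ appears, since the dissipated boundary term $\tfrac12\int_{\partial\Omega}|u|^2$ carries no $\varepsilon$. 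Absorbing the boundary term and using $\|u\|_{L^2(\Omega)}\le\|u\|_X$, Gronwall's lemma gives the $L^\infty((0,T),X)$ bound with constant depending only on $T$; a time integration of the differential inequality then controls $\varepsilon\int_0^T\|\nabla u\|_{L^2(\Omega)}^2$, which together with $\varepsilon<1$ yields the term $\varepsilon^{1/2}\|u\|_{L^2((0,T),H^1(\Omega))}$ and establishes \eqref{L2H1}.

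For the second item the key is the analogous estimate for $\partial_t u$, available because, after the regularisation step, $u$ is a strong solution. I would pair $\partial_t u=\mathcal{A}u+F(t)$ with $\partial_t u$, write $\langle\mathcal{A}u,\partial_t u\rangle_X=-a(u,\partial_t u)$ with $a$ as in \eqref{a-def}, and peel off the symmetric parts $-\tfrac{\varepsilon}{2}\frac{d}{dt}\|\nabla u\|_{L^2(\Omega)}^2$ and $-\tfrac12\frac{d}{dt}\|u\|_{L^2(\Gamma_1)}^2$, arriving at
$$\|\partial_t u\|_X^2+\frac{\varepsilon}{2}\frac{d}{dt}\|\nabla u\|_{L^2(\Omega)}^2+\frac12\frac{d}{dt}\|u\|_{L^2(\Gamma_1)}^2=-\int_\Omega\partial_n u\,\partial_t u+\int_\Omega f\,\partial_t u+\int_{\partial\Omega}g\,\partial_t u.$$
Controlling $\int_\Omega\partial_n u\,\partial_t u$ and $\int_\Omega f\,\partial_t u$ by Young costs only $\|\nabla u\|_{L^2(\Omega)}^2$, $\|f\|_{L^2(\Omega)}^2$ and small multiples of $\|\partial_t u\|_X^2$, but the boundary source must be split as $\int_{\partial\Omega}g\,\partial_t u\le\varepsilon^{-1}\|g\|_{L^2(\partial\Omega)}^2+\tfrac{\varepsilon}{4}\|\partial_t u\|_{L^2(\partial\Omega)}^2\le\varepsilon^{-1}\|g\|_{L^2(\partial\Omega)}^2+\tfrac14\|\partial_t u\|_X^2$ --- this is where the negative power of $\varepsilon$ is forced, and it is the main obstacle to a uniform-in-$\varepsilon$ bound. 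Absorbing the $\|\partial_t u\|_X^2$ contributions and integrating in time bounds $\int_0^T\|\partial_t u\|_X^2$ and $\varepsilon\|\nabla u(t)\|_{L^2(\Omega)}^2$ by $\varepsilon\|\nabla u_0\|_{L^2(\Omega)}^2+\|u_0\|_{L^2(\Gamma_1)}^2+\int_0^T\|\nabla u\|_{L^2(\Omega)}^2+\|f\|_{L^2}^2+\varepsilon^{-1}\|g\|_{L^2}^2$; feeding in the first item in the form $\int_0^T\|\nabla u\|_{L^2(\Omega)}^2\le\varepsilon^{-1}C(T)(\|u_0\|_X^2+\|f\|_{L^2}^2+\|g\|_{L^2}^2)$ and the trace bound $\|u_0\|_{L^2(\partial\Omega)}\lesssim\|u_0\|_{H^1(\Omega)}$ (with $\varepsilon<1$) produces the $\varepsilon^{1/2}\|u\|_{L^\infty((0,T),H^1)}$ and $\|\partial_t u\|_{L^2((0,T),X)}$ parts of \eqref{L2H2} with the announced factor $\varepsilon^{-1/2}$.

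It remains to recover the second-order terms algebraically. The interior equation gives $\varepsilon\Delta u=\partial_t u+\partial_n u-f$, hence $\varepsilon\|\Delta u\|_{L^2((0,T)\times\Omega)}\le\|\partial_t u\|_{L^2((0,T),X)}+\|\nabla u\|_{L^2((0,T)\times\Omega)}+\|f\|_{L^2}$, each summand already of size $\varepsilon^{-1/2}C(T)(\cdots)$ --- the middle one by the $L^2((0,T),H^1)$ bound of the first item. The boundary conditions give $\varepsilon\partial_\nu u=g_0-\varepsilon\partial_t u$ on $\Gamma_0$ and $\varepsilon\partial_\nu u=g_1-u-\varepsilon\partial_t u$ on $\Gamma_1$, so $\varepsilon\|\partial_\nu u\|_{L^2((0,T)\times\partial\Omega)}\lesssim\|g\|_{L^2}+\|u\|_{L^2((0,T)\times\Gamma_1)}+\varepsilon\|\partial_t u\|_{L^2((0,T)\times\partial\Omega)}$, and since $\varepsilon\|\partial_t u\|_{L^2((0,T)\times\partial\Omega)}\le\varepsilon^{1/2}\|\partial_t u\|_{L^2((0,T),X)}$ and $\|u\|_{L^2((0,T)\times\Gamma_1)}\lesssim\|u\|_{L^2((0,T),H^1)}$, this too has the required size; in particular $u(t)\in\mathcal{D}(\mathcal{A})$ for a.e.\ $t$ with $\mathcal{A}u\in L^2((0,T),X)$. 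The continuity statements $u\in\mathcal{C}([0,T],X)$, $u\in\mathcal{C}([0,T],H^1(\Omega))$ and $u\in L^2((0,T),\mathcal{D}(\mathcal{A}))$ then follow from these bounds, the semigroup representation of $u$, and the density argument.
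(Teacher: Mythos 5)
Your proposal is correct and follows essentially the same route as the paper: an energy identity obtained by pairing with $u$ (giving \eqref{L2H1} via Young and Gronwall), then pairing with $\partial_t u$, peeling off the symmetric terms $\frac{\varepsilon}{2}\frac{d}{dt}\|\nabla u\|^2_{L^2(\Omega)}$ and $\frac12\frac{d}{dt}\|u\|^2_{L^2(\Gamma_1)}$, feeding the first estimate back in, and finally recovering $\Delta u$ and $\partial_\nu u$ algebraically from the equation and the boundary conditions. Your explicit tracking of the $\varepsilon^{-1}$ forced by the boundary source term $\int_{\partial\Omega} g\,\partial_t u$ is in fact slightly more careful than the paper's displayed intermediate inequality, but it leads to the same final powers of $\varepsilon$ in \eqref{L2H2}.
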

\begin{proof}
Approximating $u$ by a regular function (in $L^2((0,T),{\cal D}(\overline{\Omega}))$), all computations below are justified.
\begin{itemize}
  \item First, we multiply our main equation by $u$ and we integrate on $\Omega$. An application of Green-Riemann formula with use of the boundary conditions gives the identity
      \[  \frac{1}{2}\frac{d}{d t} \| u\|^2_X + \frac{1}{2}\int_{\partial\Omega} |u|^2 + \varepsilon \int_{\Omega} | \nabla u |^2 \ = \int_{\Omega} f u + \int_{\partial \Omega} g  u,\]
      which immediately yields
      \begin{equation*}
      \frac{d}{d t} \| u\|^2_X +\int_{\partial\Omega} |u|^2 + 2\varepsilon \int_{\Omega} | \nabla u |^2 \leq \| u\|^2_X +\|f\|^2_{L^2(\Omega)}+\|g\|^2_{L^2(\partial\Omega)}.
      \end{equation*}
      Finally, Gronwall's lemma yields (\ref{L2H1}).
  \item Now, we multiply the main equation by $ u_t$ and we integrate
on $\Omega$, which, using  Green-Riemann formula and the boundary conditions provide the
identity
\[  \| u_t\|^2_X + \int_{\Omega} \partial_{x_n} u
   u_t + \frac{\varepsilon}{2} \frac{d}{d t} \left(
   \int_{\Omega} | \nabla u |^2 \right)+\frac{1}{2} \frac{d}{d t} \left(
   \int_{\Gamma_1} | u |^2 \right) = \int_{\Omega} f u_t +\int_{\partial \Omega} g u_t,\]
Now
\[ - \int_{\Omega} u_t\partial_{x_n} u  \leqslant
   \frac{1}{2} \int_{\Omega} | \nabla u |^2 + \frac{1}{2} \| u_t  \|^2_X \]
and, using again Young's inequality, we have
\[  \int_{\Omega} f u_t + \int_{\partial \Omega} g u_t
   \leqslant \frac{1}{4} \| u_t\|^2_X + C (\| f \|^2_{L^2(\Omega)}+\| g \|^2_{L^2(\partial\Omega)}).
   \]
We have thus proven the following inequality
\begin{equation}\label{EI}
 \| u_t \|^2_X +2\varepsilon \frac{d}{d t} \left(
   \int_{\Omega} | \nabla u |^2 \right)+2\frac{d}{d t}\int_{\Gamma_1}|u|^2 \leq 2\int_{\Omega} | \nabla u |^2 +
C \left(\| g \|^2_{L^2(\partial\Omega)}+\| f \|^2_{L^2(\Omega)}\right).
\end{equation}
We integrate here between $0$ and $t$ and estimate the first term in the right-hand side using (\ref{L2H1})
to get
\begin{equation*}
\int_{\Omega} | \nabla u |^2 \leqslant
   \frac{C}{\varepsilon^2} \left(\int_0^T (\| f(t) \|^2_{L^2(\Omega)}+\| g(t) \|^2_{L^2(\partial\Omega)}) d
   t+\| u_0 \|^2_{H^1(\Omega)}\right),
\end{equation*}
for $t\in (0,T)$. We now inject this into \eqref{EI} to get the second part of the required result
\[\| u_t \|^2_{L^2((0,T);X)} \leqslant \frac{C}{\varepsilon} \left(\int_0^T (\| f(t) \|^2_{L^2(\Omega)}+\| g(t) \|^2_{L^2(\partial\Omega)}) d
   t+\| u_0 \|^2_{H^1(\Omega)}\right). \]
Writing the problem as
\begin{eqnarray*}
   \left\{  \begin{array}{l}
     -\Delta u = \frac{1}{\varepsilon}(f - u_t-\partial_{x_n} u)\\
    \partial_{\nu} u = \frac{g_0}{\varepsilon} - u_t\\
    \partial_{\nu} u = \frac{g_1}{\varepsilon} - u_t-\frac{1}{\varepsilon}u
  \end{array} \right.
\begin{array}{c}
     \text{in } (0, T) \times \Omega,\\
     \text{on } (0, T) \times \Gamma_0,\\
     \text{on } (0, T) \times \Gamma_1,
   \end{array}
\end{eqnarray*}
we get that $u \in L^2 ((0, T), \mathcal{D(A)})$ and the existence of some constant $C>0$ such that
\begin{eqnarray*}
& &\|\Delta u\|_{L^2((0,T) \times \Omega)}+\| \partial_\nu u \|_{L^2((0,T)\times \partial \Omega)}\\
& & \leq \frac{C}{\varepsilon} (\|f\|_{L^2((0,T)\times\Omega)}+\|g\|_{L^2((0,T)\times\partial\Omega)}+ \|u\|_{L^2 ((0, T), H^1 (\Omega))}+\|  u_t \|_{L^2((0,T),X)})
\end{eqnarray*}

which gives the last part of the required result (\ref{L2H2}).
\end{itemize}
\end{proof}

\begin{remark}\label{remarqueH2}
Working on the non-homogeneous adjoint problem and using the backward semigroup $e^{(T-t)\mathcal{A}^{\ast}}$, one is able to show that the estimates of Lemma \ref{RE} hold for the system
\[ (S'_{f,g_0,g_1})^n \left\{ \begin{array}{c}
     \varphi_t + \partial_{x_n} \varphi + \varepsilon \Delta \varphi = f\\
     \varepsilon (\varphi_t - \partial_{\nu} \varphi) - \varphi = g_0\\
     \varepsilon(\varphi_t - \partial_{\nu} \varphi) = g_1\\
     \varphi (T, .) = \varphi_T
   \end{array} \right. \begin{array}{c}
     \text{in } (0, T) \times \Omega,\\
     \text{on } (0, T) \times \Gamma_0,\\
     \text{on } (0, T) \times \Gamma_1,\\
     \text{in } \Omega.
   \end{array} \]
\end{remark}

\subsection{Backward uniqueness}

We will show here the backward uniqueness of systems $(S_0)^n$ and $(S')^n$ for $\varepsilon>0$, thanks to
the well-known result of Lions-Malgrange (\cite{LM}) and the regularization effect.
\begin{lemma}\label{Reg}
Let $u_0\in X$ and $\delta \in (0,T)$. Then, the solution $u$ of $(S_0)^n$ satisfies $u\in \mathcal{C}([\delta,T],\mathcal{ D(A) })$.
\end{lemma}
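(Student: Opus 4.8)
The plan is to exploit the smoothing property of the analytic semigroup $(e^{t\mathcal{A}})_{t\geq 0}$ generated by $\mathcal{A}$, which was established in the first proposition of this section. Recall the standard fact for analytic semigroups: for every $u_0\in X$ and every $t>0$ one has $e^{t\mathcal{A}}u_0\in\mathcal{D}(\mathcal{A})$ (indeed $e^{t\mathcal{A}}u_0\in\bigcap_{k\geq 1}\mathcal{D}(\mathcal{A}^k)$), the map $t\mapsto e^{t\mathcal{A}}u_0$ is of class $\mathcal{C}^\infty$ from $(0,\infty)$ into $X$, and $\frac{d}{dt}e^{t\mathcal{A}}u_0=\mathcal{A}e^{t\mathcal{A}}u_0$, with $\|\mathcal{A}e^{t\mathcal{A}}\|_{\mathcal{L}(X)}\lesssim t^{-1}$ on compact subsets of $(0,\infty)$.

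First I would record that, with the domain $\mathcal{D}(\mathcal{A})$ and norm adopted above, the graph norm is equivalent to $u\mapsto\|u\|_X+\|\mathcal{A}u\|_X$, since for $u\in\mathcal{D}(\mathcal{A})$ one has $a(u,v)=\langle-\mathcal{A}u,v\rangle_X$ and hence $\sup_{v\neq 0}|a(u,v)|/\|v\|_X=\|\mathcal{A}u\|_X$. Consequently, proving $u\in\mathcal{C}([\delta,T],\mathcal{D}(\mathcal{A}))$ amounts to proving that both $t\mapsto u(t)=e^{t\mathcal{A}}u_0$ and $t\mapsto\mathcal{A}u(t)$ are continuous from $[\delta,T]$ into $X$. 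The former is already part of the definition of weak solution; the latter follows from the displayed analyticity statement, because $\mathcal{A}e^{t\mathcal{A}}u_0=\frac{d}{dt}e^{t\mathcal{A}}u_0$ is continuous (in fact $\mathcal{C}^\infty$) on $(0,\infty)\supset[\delta,T]$. This finishes the argument.

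Alternatively, one can avoid invoking the abstract smoothing estimate and bootstrap Lemma \ref{RE} instead: its first bullet gives $u\in L^2((0,T),H^1(\Omega))$, so $u(t_1)\in H^1(\Omega)$ for some $t_1\in(0,\delta/2)$; applying the second bullet of Lemma \ref{RE} on the interval $(t_1,T)$ (with time origin translated to $t_1$) then yields $u\in L^2((t_1,T),\mathcal{D}(\mathcal{A}))$, hence $u(t_2)\in\mathcal{D}(\mathcal{A})$ for some $t_2\in(t_1,\delta)$; finally, the solution of $(S_0)$ issued from $u(t_2)\in\mathcal{D}(\mathcal{A})$ is a strong solution, so it belongs to $\mathcal{C}([t_2,T],\mathcal{D}(\mathcal{A}))$, and restricting to $[\delta,T]\subset[t_2,T]$ gives the claim.

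There is no real obstacle here, both routes being routine parabolic regularization arguments; the only points requiring a little care are the identification of the graph norm of $\mathcal{D}(\mathcal{A})$ with $\|\cdot\|_X+\|\mathcal{A}\cdot\|_X$ (so that continuity into $\mathcal{D}(\mathcal{A})$ is exactly continuity of $u$ and of $\mathcal{A}u$ in $X$), and, in the bootstrap version, the fact that Lemma \ref{RE} and the notion of strong solution are stated with a fixed time origin, so they must be applied on suitably translated time intervals.
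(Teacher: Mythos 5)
Your proposal is correct, and it is worth separating your two routes, because neither coincides exactly with the paper's argument. The paper proceeds by a cut-off bootstrap: it sets $u_1=\theta_1 u$ with $\theta_1$ vanishing near $t=0$, so that $u_1$ solves the nonhomogeneous system $(S_{\theta_1'u,\,\varepsilon\theta_1'u,\,\varepsilon\theta_1'u})$ with zero (hence $H^1$) initial datum, and Lemma \ref{RE} gives $u\in L^2((\delta,T),\mathcal{D}(\mathcal{A}))$; it then repeats the trick on $u_2=\partial_t(\theta_2 u)$, differentiating the system in time, to get $\partial_t u\in L^2((\delta,T),\mathcal{D}(\mathcal{A}))$, and concludes via the embedding $H^1((\delta,T);\mathcal{D}(\mathcal{A}))\hookrightarrow\mathcal{C}([\delta,T];\mathcal{D}(\mathcal{A}))$. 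Your second route uses the same engine (Lemma \ref{RE}) but replaces the cut-offs by restarting the autonomous flow at almost-every good times $t_1,t_2$, and replaces the second, time-differentiated application of Lemma \ref{RE} by the standard fact that the orbit of a $\mathcal{D}(\mathcal{A})$ datum under a $C_0$-semigroup lies in $\mathcal{C}([0,\infty),\mathcal{D}(\mathcal{A}))$; this is consistent with the paper's own definition of strong solutions and is, if anything, tidier than the published proof. Your first route is genuinely different: it bypasses Lemma \ref{RE} entirely and invokes the abstract smoothing $\|\mathcal{A}e^{t\mathcal{A}}\|_{\mathcal{L}(X)}\lesssim t^{-1}$ of analytic semigroups, together with the correct observation that the paper's graph norm $\|u\|_X+\sup_{v\neq 0}|a(u,v)|/\|v\|_X$ equals $\|u\|_X+\|\mathcal{A}u\|_X$. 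This is shorter, but be aware that it leans on the analyticity asserted in the paper's first proposition, whose proof as written only checks the Hille--Yosida (maximal monotonicity) hypotheses and so only delivers a $C_0$-semigroup of contractions; analyticity does hold here (the form $a$ is bounded on $V=H^1(\Omega)$ and satisfies a G\aa rding inequality, so the associated semigroup on $X$ is analytic), but if you want an argument resting only on what the paper actually establishes, your second route --- or the paper's cut-off version of it --- is the safer one.
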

\begin{proof}
Our strategy is to show that $u \in L^2 ((\delta, T) ; H^2 (\Omega))$ and
 $ u_t \in L^2 ((\delta, T) ; H^2 (\Omega))$. This easily implies
that $u \in \mathcal{C}([\delta, T] ; H^2 (\Omega))$.
\par
We first select some regular cut-off function $\theta_1$ such that $\theta_1=1$ on $(\delta/2,T)$ and $\theta_1=0$ on $(0,\delta/4)$. If $u_1=\theta_1 u$, we have that
\begin{eqnarray*}
  \left\{ \begin{array}{l}
    u_{1,t} + \partial_{x_n} u_1 - \varepsilon \Delta u_1 = \theta_1' u\\
    \varepsilon(u_{1,t} + \partial_{\nu} u_1) = \varepsilon\theta_1' u\\
    \varepsilon (u_{1,t} + \partial_{\nu} u_1) + u_1 = \varepsilon
    \theta_1' u\\
u_1(0,.)=0
  \end{array} \right.
\begin{array}{c}
     \text{in } (0, T) \times \Omega,\\
     \text{on } (0, T) \times \Gamma_0,\\
     \text{on } (0, T) \times \Gamma_1,\\
      \text{in }  \Omega,
   \end{array}
\end{eqnarray*}
that is $u_1$ satisfies $(S_{\theta_1' u,\varepsilon\theta_1' u,\varepsilon\theta_1' u})^n$. An application of Lemma \ref{RE} gives that $u_1\in L^2((0,T),\mathcal{ D(A) })$ and $u_{1,t}\in L^2((0,T),X)$. This implies in particular that $u\in L^2((\delta,T),\mathcal{ D(A) })$.
\par
We now focus on $u_t$ and we select another cut-off function $\theta_2$ such that $\theta_2=1$ on $(\delta,T)$ and $\theta_2=0$ on $(0,\delta/2)$. We write the system satisfied by $\theta_2 u$ and we differentiate it with respect to time.
One deduces that $u_2=(\theta_2 u)_t$ is the solution of
$$(S_{\theta_2' u_t+\theta_2'' u,\varepsilon(\theta_2' u_t+\theta_2''u),\varepsilon(\theta_2' u_t+\theta_2'' u)})^n.$$
Using that $|\theta_2'| \lesssim |\theta_1|$, Lemma \ref{RE} gives that $u_2\in L^2((0,T),\mathcal{ D(A) })$ that is $u_t\in L^2((\delta,T),\mathcal{ D(A) })$.

\end{proof}

\begin{proposition}\label{BU}
  Assume that $u$ is a weak solution of $(S_0)^n$ such that $u (T) = 0$. Then
  $u_0 \equiv 0$.
\end{proposition}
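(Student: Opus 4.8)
The quickest route I would take uses only the analyticity of the semigroup already established. Since a weak solution of $(S_0)$ is by construction $u(t)=e^{t\mathcal{A}}u_{0}$, recall that $z\mapsto e^{z\mathcal{A}}u_{0}$ extends to a holomorphic $X$-valued function on a sector $\{z\in\mathbb{C}\setminus\{0\}:\ |\arg z|<\theta\}$, is strongly continuous up to $t=0$ with value $u_{0}$, and satisfies the semigroup law there. From $u(T)=e^{T\mathcal{A}}u_{0}=0$ we get $e^{(T+z)\mathcal{A}}u_{0}=e^{z\mathcal{A}}e^{T\mathcal{A}}u_{0}=0$ for every $z$ in the sector, so the holomorphic map $z\mapsto e^{z\mathcal{A}}u_{0}$ vanishes on a non-empty open subset of the connected sector, hence vanishes identically by the vector-valued identity theorem (apply it to $\lambda\circ e^{z\mathcal{A}}u_{0}$ for $\lambda\in X'$ and use Hahn--Banach). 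Letting $t\to0^{+}$ along the real axis gives $u_{0}=0$.

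Alternatively — and this is the path for which Lemma~\ref{Reg} has been prepared — one can give a variational proof through the backward uniqueness theorem of Lions--Malgrange \cite{LM}. Fix $\delta\in(0,T)$; by Lemma~\ref{Reg} one has $u\in\mathcal{C}([\delta,T],\mathcal{D(A)})$, and since $u(t)=e^{(t-\delta)\mathcal{A}}u(\delta)$ with $u(\delta)\in\mathcal{D(A)}$, $u$ is a strong solution of $\dot u=\mathcal{A}u$ on $[\delta,T]$, in particular $u\in L^{2}(\delta,T;V)$ and $\dot u\in\mathcal{C}([\delta,T],X)\subset L^{2}(\delta,T;V')$ with $V=H^{1}(\Omega)$, $H=X$. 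The triple $V\hookrightarrow H\hookrightarrow V'$ is continuous and dense (density from $\mathcal{D}(\bar\Omega)\subset V$ and the definition of $X$, continuity from the trace inequality $\|w\|_{X}^{2}\lesssim\|w\|_{H^{1}(\Omega)}^{2}$), the time-independent form $a$ of \eqref{a-def} is bounded on $V$, and the Green--Riemann computation already used for the monotony of $\mathcal{A}$ gives $a(w,w)=\varepsilon\|\nabla w\|_{L^{2}(\Omega)}^{2}+\tfrac12\|w\|_{L^{2}(\partial\Omega)}^{2}$, hence the G\aa rding inequality $a(w,w)+\|w\|_{X}^{2}\geq\min\{\varepsilon,1\}\|w\|_{H^{1}(\Omega)}^{2}$; the only first-order term $\partial_{n}u$ is of lower order and, after integration by parts, contributes to $a$ only an antisymmetric part bounded by $\|w_{1}\|_{V}\|w_{2}\|_{H}$. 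Thus the hypotheses of \cite{LM} hold on $[\delta,T]$, and $u(T)=0$ forces $u\equiv0$ on $[\delta,T]$, so $u(\delta)=0$; since $\delta\in(0,T)$ is arbitrary and $u\in\mathcal{C}([0,T],X)$, we conclude $u_{0}=u(0)=0$.

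With the analytic route there is essentially no obstacle beyond recalling that the semigroup is operator-norm holomorphic in a sector. With the variational route, the genuine work is the bookkeeping that fits our problem into the Lions--Malgrange framework — coping with the non-standard pivot space $X$, whose norm carries a boundary contribution, and checking that the advection term $\partial_{n}u$ spoils neither the G\aa rding inequality nor the structural hypotheses on $a$. In either case the same scheme transfers verbatim to $(S')$ via $\mathcal{A}^{\ast}$, which also generates an analytic semigroup.
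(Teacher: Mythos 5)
Your second argument is, up to minor rephrasing, exactly the paper's proof: regularize with Lemma~\ref{Reg} so that $u(\delta)\in\mathcal{D}(\mathcal{A})$, split $a=a_0+a_1$ with $a_0(u_1,u_2)=\varepsilon\int_\Omega\nabla u_1\cdot\nabla u_2$ coercive up to $\|\cdot\|_X^2$ and $a_1$ (the advection and $\Gamma_1$ terms) bounded by $\|u_1\|_V\|u_2\|_X$, invoke Lions--Malgrange on $(\delta,T)$ to get $u(\delta)=0$, and let $\delta\to0$. Your first argument is genuinely different and much shorter: it deduces backward uniqueness directly from holomorphy of $z\mapsto e^{z\mathcal{A}}u_0$ on a sector, the semigroup law $e^{(T+z)\mathcal{A}}u_0=e^{z\mathcal{A}}e^{T\mathcal{A}}u_0=0$, and the vector-valued identity theorem. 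This is correct \emph{provided} the semigroup really is analytic. The paper asserts analyticity in its Proposition~1, but the proof given there (maximal monotonicity plus Hille--Yosida) only yields a $C_0$ contraction semigroup; to use your shortcut honestly you should add the standard observation that $\mathcal{A}$ is the operator associated with the bounded, quasi-coercive form $a$ on $V=H^1(\Omega)\hookrightarrow H=X$ (the G\aa rding inequality you already wrote down makes the form sectorial), which by the classical form-method theorem does generate an analytic semigroup on $X$. With that supplement, the analytic route buys a two-line proof that bypasses both Lemma~\ref{Reg} and \cite{LM}; the variational route, which the paper follows, is heavier but does not rely on sectoriality and would survive time-dependent lower-order perturbations of the operator, which is presumably why the authors chose it. Your transfer of either argument to $(S')$ via $\mathcal{A}^{\ast}$ matches the paper's Remark~5.
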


\begin{proof}
If $0 < \delta < T$, then Lemma \ref{Reg} shows that  $u (\delta) \in \mathcal{ D(A) }$.
We will now apply Th\'eor\`eme 1.1 of \cite{LM} to $u$ as a solution of $(S_0)^n$ in the time interval $(\delta,T)$ to show that $u
  (\delta,.) = 0$. The bilinear form $a$ defined in \eqref{a-def} can be split into two
bilinear forms on $V$ defined by
  \[ a (u_1, u_2) = a_0 (t, u_1, u_2) + a_1 (t, u_1, u_2) \]
with
  \[ a_0 (t, u_1, u_2) = \varepsilon\int_{\Omega} \nabla u_1. \nabla u_2, \quad a_1 (t, u_1, u_2) =
     \int_{\Omega} \partial_{x_n} u_1 u_2 +\int_{\Gamma_1} u_1 u_2 . \]
The first four hypotheses in \cite{LM} (see (1.1)-(1.4) in that reference) are satisfied since $u\in L^2(\delta,T;V)\cap H^1(\delta,T;H)$ (from Lemma~\ref{RE} above),
$u(t)\in \mathcal{D}(\mathcal{A})$ for almost every $t\in (\delta,T)$, $u_t=\mathcal{A}u$ and $u(T,.)\equiv0$.
\noindent On the other hand, it is clear that $a_0$ and $a_1$ are continuous bilinear forms on $V$ and
that they do not depend on time $t$. It is also straightforward to see that
\[ \forall u \in V ,\, a_0 (t, u, u) +\|u \|^2_X \geqslant \min\{1,\varepsilon\} \|u \|^2 \]
and, for some constant $C > 0$,
\[  \forall u,v \in V ,\, |a_1 (t, u_1, u_2) | \leqslant C\|u_1 \| \|u_2 \|_X . \]
This means that Hypothesis I in that reference is fulfilled. We have shown that $u(\delta,.) = 0$, which finishes the proof
  since
\begin{equation*}
 u_0 = \lim_{\delta \rightarrow 0} u (\delta) = 0.
\end{equation*} \end{proof}

\begin{remark} The result also holds for the adjoint system:
  if $\varphi$ is a weak solution of $(S')^n$ such that $\varphi (0) \equiv
  0$ then $\varphi_T = 0$. The proof is very similar to the one above and is left to the reader.
\end{remark}

\subsection{Proof of Proposition \ref{OBS}}
The fact that $u=0$ on $(0,T)\times\Gamma_1$ implies $u_0\equiv 0$ is a straightforward consequence of the backward uniqueness
of system $(S_0)^n$. Indeed, similarly as (\ref{OI}), we can prove the observability inequality
$$
\|u(T,.)\|_{X}\lesssim \|u\|_{L^2((0,T)\times\Gamma_1)},
$$
(see Remark~\ref{Betis} below), which combined with Proposition~\ref{BU} gives $u_0\equiv 0$.
\par \noindent To show that \eqref{FE} is false, we first need to prove some well-posedness result of $(S_0)^n$ with $u_0$ in a less regular space.
For $s\in (0,1/2)$, we define $X^s$ as the closure of $\mathcal{D} (
\bar{\Omega}$) for the norm $$\|u\|_{X^s} \assign \left( \|u\|_{H^s(\Omega)}^2 +
\varepsilon \|u\|_{H^s(\partial\Omega)}^2 \right)^{\frac{1}{2}},$$ where $H^s(\Omega)$ (resp. $H^s(\partial\Omega)$) stands for the usual Sobolev space on $\Omega$ (resp. $\partial\Omega$). One easily shows that $X^s$ is a Hilbert space. We now denote $X^{-s}$ the set of functions $u_2$ such that the linear form
$$u_1\in \mathcal{D} (
\bar{\Omega}) \longmapsto<u_1,u_2>_X$$ can be extended in a continuous way on $X^s$. If $u_1\in X^{-s}$, $u_2\in X^s$ we denote this extension by $<u_1,u_2>_{-s,s}$.
\par
\vskip 0.2cm
If $u_0\in X^{-s}$, we say that $u$ is a \textbf{solution by transposition} of $(S_0)^n$ if, for every $f\in L^2((0,T)\times \Omega)$
\begin{equation*}
\int_0^T \int_\Omega uf+<u_0,\varphi(0,.)>_{-s,s}=0,
\end{equation*}
where $\varphi$ is the weak solution (see Remark 4) of
\[ (S'_{f,0,0})^n \left\{ \begin{array}{c}
     \varphi_t + \partial_{x_n} \varphi + \varepsilon \Delta \varphi = f\\
     \varepsilon (\varphi_t - \partial_{\nu} \varphi) - \varphi = 0\\
     \varphi_t - \partial_{\nu} \varphi = 0\\
     \varphi (T, .) = 0
   \end{array} \right. \begin{array}{c}
     \text{in } (0, T) \times \Omega,\\
     \text{on } (0, T) \times \Gamma_0,\\
     \text{on } (0, T) \times \Gamma_1,\\
     \text{in } \Omega.
   \end{array} \]
Using the Riesz representation theorem and continuity (see Remark 4)  of
$$f\in L^2((0,T)\times \Omega)\longmapsto \varphi(0,.)\in H^1(\Omega),$$
it is obvious that, for any $u_0\in X^{-s}$, there exists a solution by transposition of $(S_0)^n$, $u\in L^2((0,T)\times\Omega)$, such that
\begin{equation*}
\|u\|_{L^2((0,T)\times\Omega)} \lesssim \|u_0\|_{X^{-s}}.
\end{equation*}
Moreover, if $u_0 \in X=X^0$, Lemma \ref{RE} shows that the solution $u$ satisfies
\begin{equation*}
\|u\|_{L^2((0,T), H^1( \Omega))} \lesssim \|u_0\|_{X^0}.
\end{equation*}
If one uses classical interpolation results (see \cite{LMa}), we can deduce that for every $\theta\in (0,1/2)$, for any $u_0\in X^{-s\theta}=[X,X^{-s}]_{1-\theta}$,
there exists a solution $u\in L^2((0,T), H^{1-\theta}( \Omega))$ such that
\begin{equation*}
\|u\|_{L^2((0,T), H^{1-\theta}( \Omega))} \lesssim \|u_0\|_{X^{-s\theta}}.
\end{equation*}
Using classical trace result, we have that
\begin{equation}\label{I1}
\|u\|_{L^2((0,T)\times \partial\Omega)} \lesssim \|u_0\|_{X^{-s\theta}}.
\end{equation}
\par \noindent On the other hand, estimate \eqref{FE} implies in particular that
\begin{equation}\label{I2}
\|u_0 \|_X \lesssim \|u\|_{L^2 ( (0, T)\times\Gamma_1 )}\quad \forall u_0 \in \mathcal{D}(\overline{\Omega}).
\end{equation}
Finally, \eqref{I1} and \eqref{I2} yields the contradictory inclusion $X^{-s\theta}\hookrightarrow X$. $\square$

\section{Carleman inequality in dimension 1}

In this paragraph, we will establish a Carleman-type inequality keeping track of the explicit dependence of all the constants with respect to $T$ and $\varepsilon$. As in \cite{FI}, we introduce the following weight functions:
\[ \forall x\in [-L,0], \quad \eta(x) \assign 2 L + x,\,\,\,\, \alpha(t,x) \assign \frac{\lambda - e^{\eta(x)}}{\varepsilon^2t ( T - t)}, \,\,\,\,\phi(t,x)
   \assign \frac{e^{ \eta(x)}}{\varepsilon^2 t ({T} - t)} , \]
where $\lambda>e^{2L}$.
\par \noindent The rest of this paragraph will be dedicated to the proof of the following inequality:
\begin{theorem}\label{The:Carleman}
  There exists $C>0$ and $s_0>0$ such that for every $\varepsilon\in (0,1)$ and every $s \geqslant s_0 ( \varepsilon T + \varepsilon^2T^2)$ the following inequality is satisfied for every $\varphi_T\in X$:
\begin{equation}\label{Carlemana}
\begin{array}{l}\displaystyle
 s^3 \int_{(0,T)\times (-L,0)} \phi^3
     e^{- 2 s \alpha} | \varphi |^2 + s \int_{(0,T)\times (-L,0)} \phi e^{- 2 s \alpha} | \varphi_x
     |^2 + s^3 \int_{(0, T)\times \{0,-L\}} \phi^3 e^{- 2 s \alpha} | \varphi
     |^2
     \\ \displaystyle
\leqslant C s^7\int_{(0, T)\times \{0\}}e^{-4s\alpha+2s\alpha(.,-L)}\phi^7|\varphi|^2.
     \end{array}
\end{equation}
Here, $\varphi$ stands for the solution of $(S')$ associated to $\varphi_T$.
\end{theorem}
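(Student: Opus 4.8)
The plan is to follow the Fursikov--Imanuvilov conjugation scheme, keeping careful track of the powers of $\varepsilon$. Set $Q\assign(0,T)\times(-L,0)$, $\psi\assign e^{-s\alpha}\varphi$, and write $e^{-s\alpha}(\partial_t+\partial_x+\varepsilon\partial_{xx})(e^{s\alpha}\psi)=P_1\psi+P_2\psi$, with $P_1\psi\assign\varepsilon\psi_{xx}+\varepsilon s^2\alpha_x^2\psi+\psi_t$ and $P_2\psi\assign 2\varepsilon s\alpha_x\psi_x+\psi_x+s(\alpha_t+\alpha_x)\psi+\varepsilon s\alpha_{xx}\psi$ (the split of the lowest-order terms being adjusted for convenience). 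Since $(S')$ gives $P_1\psi+P_2\psi=0$, one has $\|P_1\psi\|_{L^2(Q)}^2+\|P_2\psi\|_{L^2(Q)}^2+2(P_1\psi,P_2\psi)_{L^2(Q)}=0$. In parallel I would transport the two boundary conditions of $(S')$ onto $\psi$ on $(0,T)\times\{0\}$ and $(0,T)\times\{-L\}$; note in particular that combining $\varphi_t+\varphi_x=0$ on $\Gamma_1$ with the equation forces $\varphi_{xx}=0$ at $x=-L$, and that on $\Gamma_0$ one obtains $\varepsilon(\psi_t-\psi_x)+\varepsilon s(\alpha_t-\alpha_x)\psi=\psi$.

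The second step is to expand $2(P_1\psi,P_2\psi)_{L^2(Q)}$ by integrating by parts in $t$ and $x$. With the explicit identities $\alpha_x=-\phi$, $\phi_x=\phi$, $\alpha_{xx}=-\phi$ and the bounds $|\alpha_t|+|\phi_t|\lesssim\varepsilon^2T\phi^2$, $\phi\gtrsim(\varepsilon^2T^2)^{-1}$, the dominant distributed contributions are, up to positive constants, $s^3\int_Q\phi^3|\psi|^2$ and $s\int_Q\phi|\psi_x|^2$, and one also retains on the left the standard terms $\int_Qs^{-1}\phi^{-1}(|\psi_t|^2+\varepsilon^2|\psi_{xx}|^2)$. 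All remaining distributed terms are of strictly lower order in the large quantity $s\phi$ and are absorbed once $s\phi$ is bounded below by a large constant on $Q$, which is exactly what $s\geq s_0(\varepsilon T+\varepsilon^2T^2)$ ensures; this is where the $\varepsilon^{-2}$-scaling in $\alpha$ and $\phi$ pays off, balancing all the $\varepsilon$-powers so that, after undoing the conjugation (and controlling the cross terms $s\phi\,\varphi\varphi_x$ by the size of $s\phi$), one recovers exactly the first two left-hand-side terms of \eqref{Carlemana}. The boundary terms at $t\in\{0,T\}$ drop out since $e^{-2s\alpha}$ and all its polynomial-in-$\phi$ prefactors decay super-exponentially there; and the trace term $s^3\int_{(0,T)\times\{0,-L\}}\phi^3e^{-2s\alpha}|\varphi|^2$ of \eqref{Carlemana} is produced along the way, its $x=-L$ part coming with the right sign from the $x$-integration by parts and its $x=0$ part recovered against the interior terms by a one-dimensional trace inequality.

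It remains to handle the genuinely problematic space-boundary terms. On $\Gamma_1=\{-L\}$ one substitutes $\varphi_t=-\varphi_x$ and $\varphi_{xx}=0$, integrates once by parts in $t$, and absorbs the rest --- either because it is of lower order, or via Young's inequality into the favourably-signed term $\varepsilon^2s\phi\,|\psi_x(\cdot,-L)|^2$ and into the interior left-hand side through trace inequalities. On $\Gamma_0=\{0\}$ the leading boundary term, of size $\sim\varepsilon^2s\phi(\cdot,0)\int_0^T|\psi_x(t,0)|^2dt$, has the wrong sign and must be sent to the right; there one uses the conjugated boundary relation to write $\psi_x(\cdot,0)=\psi_t(\cdot,0)+c(t)\psi(\cdot,0)$ with $|c|\lesssim s\varepsilon^2T\phi^2+s\phi+\varepsilon^{-1}$. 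The $c(t)\psi(\cdot,0)$-part is already of the wanted form $\int_0^T(\text{weight})e^{-2s\alpha(\cdot,0)}|\varphi(t,0)|^2dt$, while the $\psi_t(\cdot,0)$-part is estimated by a one-dimensional trace inequality together with the retained terms $\int_Qs^{-1}\phi^{-1}(|\psi_t|^2+\varepsilon^2|\psi_{xx}|^2)$ and a further use of the $\Gamma_0$-boundary equation; each such step costs a bounded number of powers of $s\phi$, whose accumulation produces the high weight $s^9\phi^9$ in \eqref{Carlemana}. Finally, passing from the weight $e^{-2s\alpha(\cdot,-L)}$ (natural for the traces at $x=-L$) to $e^{-4s\alpha(\cdot,0)+2s\alpha(\cdot,-L)}$ at $x=0$ is just the bound $e^{-2s\alpha(\cdot,-L)}\leq e^{-4s\alpha(\cdot,0)+2s\alpha(\cdot,-L)}$, valid since $\alpha(\cdot,0)<\alpha(\cdot,-L)$.

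The step I expect to be the main obstacle is precisely this last reduction: since the leading $\Gamma_0$-boundary term has the wrong sign, one is forced to put $\psi_x(t,0)$ --- hence $\varphi_x(t,0)$ and $\varphi_t(t,0)$ --- on the right-hand side, and trading these normal and time derivatives against the Dirichlet trace $\varphi(t,0)$ alone, uniformly in $\varepsilon\in(0,1)$, is what forces both the retention of the extra left-hand-side terms and the repeated use of the boundary equation, and hence the non-standard power $s^9\phi^9$ and the double-exponential weight in \eqref{Carlemana}. Keeping all absorptions uniform in $\varepsilon$ is the pervasive secondary difficulty, responsible for the $\varepsilon$-calibration of $\alpha,\phi$ and of the threshold $s_0(\varepsilon T+\varepsilon^2T^2)$.
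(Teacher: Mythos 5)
Your overall scheme --- conjugation $\psi=e^{-s\alpha}\varphi$, expansion of the cross product $(P_1\psi,P_2\psi)$, absorption of lower-order terms for $s$ above the stated threshold, and transporting the boundary conditions of $(S')$ onto $\psi$ --- is the same as the paper's (the paper first rescales time by $\varepsilon$, setting $\tilde T=\varepsilon T$, but that is a cosmetic difference). The genuine gap is in the step you yourself flag as the main obstacle. After using the $\Gamma_0$ boundary condition to trade $\psi_x(\cdot,0)$ for $\psi_t(\cdot,0)$ and $\psi(\cdot,0)$, one is left with a wrong-sign boundary term of the form $s\int_0^T\phi\, e^{-2s\alpha}|\varphi_t(t,0)|^2\,dt$, and your proposed mechanism for removing it --- a one-dimensional trace inequality against the retained interior terms $s^{-1}\int_Q\phi^{-1}(|\psi_t|^2+\varepsilon^2|\psi_{xx}|^2)$ plus a further use of the boundary equation --- does not close. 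A trace bound for $\psi_t$ at $x=0$ requires interior control of the mixed derivative $\psi_{tx}$ (or $\varphi_{tx}$), which is not among the quantities the Carleman computation retains; the term $\varepsilon^2|\psi_{xx}|^2$ is of no help for the trace of a time derivative, and reinserting the $\Gamma_0$ boundary equation only re-expresses $\psi_t(\cdot,0)$ through $\psi_x(\cdot,0)$, going in a circle.

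The paper's device at this point is different and is the real content of the second half of its proof: it first integrates the offending term by parts in time, producing $\int_{(0,T)\times\{0\}}\phi^2 e^{-2s\alpha}\varphi\,\varphi_{tt}$, and then bounds $\varphi_{tt}$ on the boundary by applying the parabolic energy and regularity estimates of Lemma~\ref{RE} (through Remark~\ref{remarqueH2}) to the auxiliary function $\zeta:=e^{-s\tilde\alpha(\cdot,-L)}\tilde\phi^{-5/2}(\cdot,-L)\,\tilde\varphi_t$, which solves the adjoint system with controlled sources because $\varphi_t$ does. It is precisely this step --- the weight $\tilde\phi^{-5/2}(\cdot,-L)$ in the cut-off, the a priori estimate for $\zeta_t$ on the boundary, and the final Cauchy--Schwarz splitting of $|\varphi|\,|\varphi_{tt}|$ --- that generates both the power $\phi^9$ and the weight $e^{-4s\alpha+2s\alpha(\cdot,-L)}$ in \eqref{Carlemana}; your reading of that weight as a mere monotonicity bound $e^{-2s\alpha(\cdot,-L)}\leq e^{-4s\alpha(\cdot,0)+2s\alpha(\cdot,-L)}$ is not how it arises. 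Without an argument of this kind (or some other way to estimate $\varphi_t$ or $\varphi_{tt}$ on $\Gamma_0$ by the Dirichlet trace alone, uniformly in $\varepsilon$), the stated right-hand side of \eqref{Carlemana} cannot be reached.
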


\begin{remark}\label{Betis}
One can in fact obtain the following Carleman estimate with control term in $\Gamma_1$
\begin{equation*}
\begin{array}{l}\displaystyle
 s^3 \int_{(0,T)\times (-L,0)}\phi^3
     e^{- 2 s \alpha} | \varphi |^2 + s \int_{(0,T)\times (-L,0)} \phi e^{- 2 s \alpha} | \varphi_x
     |^2 + s^3 \int_{(0, T)\times \{0,-L\}} \phi^3 e^{- 2 s \alpha} | \varphi
     |^2
     \\ \noalign{\medskip}\displaystyle
\leqslant C s^7\int_{(0, T)\times \{-L\}}e^{-4s\alpha+2s\alpha(.,0)}\phi^7|\varphi|^2.
     \end{array}
\end{equation*}
simply by choosing the weight function $\eta(x)$ equal to $x\mapsto -x+L$. The proof is very similar and the sequel will show how to deal with this case too.
\end{remark}

In order to perform a Carleman inequality for system $(S')$, we first do a scaling in time.
Introducing $\tilde{T} : = \varepsilon T$,
$\tilde{\varphi} (t, x) : = \varphi (t/\varepsilon, x)$ and the weights $\tilde\alpha(t,x):=\alpha(t/\varepsilon,x)$, $\tilde\phi(t,x):=\phi(t/\varepsilon,x)$; we have the following system
\begin{equation}\label{tildevarphi}
\left\{
\begin{array}{l}
       \tilde{\varphi}_t +{\varepsilon}^{-1}\tilde{\varphi}_x + \tilde{\varphi}_{x x} = 0 \\
       \varepsilon^2(\tilde{\varphi}_t - \varepsilon^{-1}\partial_{\nu} \tilde{\varphi})-\tilde\varphi = 0   \\
       \tilde{\varphi}_t - \varepsilon^{-1}\partial_{\nu} \tilde{\varphi} = 0 \\
       \tilde{\varphi}_{|t=\tilde{T}} = \varphi_T
\end{array}
\right.
\left.
\begin{array}{l}
\text{in } q,\\
\text{on } \sigma_0,\\
\text{on } \sigma_1,\\
\text{in } (- L, 0).
\end{array}
\right.
\end{equation}
if $q := (0, \tilde{T})\times (-L,0)$ , $\sigma:=(0, \tilde{T})\times \{-L,0\}$, $\sigma_0:=(0, \tilde{T})\times \{0\}$ and $\sigma_1:=(0, \tilde{T})\times \{-L\}$. We will now explain how to get the following result.
\begin{proposition}\label{Th:Carleman}
  There exists $C>0$, such that for every $\varepsilon\in (0,1)$ and every $s \geqslant C ( \tilde{T} + \varepsilon^{-1}\tilde T^2+\varepsilon^{1/3}\tilde T^{2/3})$,
the following inequality is satisfied for every solution of (\ref{tildevarphi}) associated to $\tilde\varphi_T\in X$:
\begin{equation}\label{Carleman}
\begin{array}{l}\displaystyle
 s^3 \int_q \tilde\phi^3
     e^{- 2 s \tilde\alpha} | \tilde{\varphi} |^2 + s \int_q \tilde\phi e^{- 2 s \tilde\alpha} | \tilde{\varphi}_x
     |^2 + s^3 \int_{\sigma} \tilde\phi^3 e^{- 2 s \tilde\alpha} | \tilde{\varphi}
     |^2\leqslant C s^7\int_{\sigma_0} e^{-4s\tilde\alpha+2s\tilde\alpha(.,-L)}\tilde\phi^7|\tilde\varphi|^2.
     \end{array}
\end{equation}
\end{proposition}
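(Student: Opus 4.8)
The plan is to establish the Carleman estimate \eqref{Carleman} for the scaled system \eqref{tildevarphi} by the standard Fursikov--Imanuvilov route: introduce $\psi := e^{-s\tilde\alpha}\tilde\varphi$, derive the equation satisfied by $\psi$, split the resulting operator into its self-adjoint and skew-adjoint parts, compute the $L^2(q)$-inner product of the two parts, and integrate by parts to produce the weighted energy terms on the left-hand side together with boundary contributions on $\sigma_0$ and $\sigma_1$. The novelty here (compared with the interior-control case) is that (i) the equation is of the form $\tilde\varphi_t + \varepsilon^{-1}\tilde\varphi_x + \tilde\varphi_{xx}=0$, so the transport term $\varepsilon^{-1}\tilde\varphi_x$ must be carried along and controlled by absorption into the dominant terms $s^3\int \tilde\phi^3 e^{-2s\tilde\alpha}|\tilde\varphi|^2$ and $s\int \tilde\phi e^{-2s\tilde\alpha}|\tilde\varphi_x|^2$, which forces the lower bound on $s$ to include the term $\varepsilon^{-1}\tilde T^2$; and (ii) the boundary conditions on $\sigma_0,\sigma_1$ are of the dynamic/Robin type $\varepsilon^2(\tilde\varphi_t-\varepsilon^{-1}\partial_\nu\tilde\varphi)-\tilde\varphi=0$ and $\tilde\varphi_t-\varepsilon^{-1}\partial_\nu\tilde\varphi=0$, so the boundary terms produced by the integration by parts are not simply Dirichlet or Neumann traces but mixtures of $\tilde\varphi$, $\partial_\nu\tilde\varphi$ and $\tilde\varphi_t$ on $\sigma$; these must be handled using the boundary equations themselves, producing the boundary energy term $s^3\int_\sigma \tilde\phi^3 e^{-2s\tilde\alpha}|\tilde\varphi|^2$ on the left.

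**Next I would** carry out the absorption argument carefully. After the integration by parts one obtains, for $s$ large, a lower bound of the form $s^3\int_q \tilde\phi^3 e^{-2s\tilde\alpha}|\tilde\varphi|^2 + s\int_q \tilde\phi e^{-2s\tilde\alpha}|\tilde\varphi_x|^2 + (\text{boundary terms on }\sigma) \lesssim \|e^{-s\tilde\alpha}(\tilde\varphi_t+\varepsilon^{-1}\tilde\varphi_x+\tilde\varphi_{xx})\|^2_{L^2(q)} + (\text{remaining boundary terms})$. Since the right-hand side of the PDE is zero in the interior, the first term on the right vanishes; the transport contribution $\varepsilon^{-1}\tilde\varphi_x$ that shows up inside the computation (as a cross term $\varepsilon^{-1}\int \psi_x \cdot(\text{something})$) is estimated by $\varepsilon^{-1}\|e^{-s\tilde\alpha}\tilde\varphi_x\|\cdot(\ldots)$ and then absorbed: because $\tilde\phi \gtrsim \tilde T^{-2}$, one has $\varepsilon^{-1}|\tilde\varphi_x| \le (\varepsilon^{-1}\tilde T^2\tilde\phi^{1/2})(\tilde\phi^{1/2}|\tilde\varphi_x|)$, which is dominated by $s^{1/2}\tilde\phi^{1/2}|\tilde\varphi_x|$ precisely once $s \gtrsim \varepsilon^{-2}\tilde T^4$ — but a sharper bookkeeping using the form of $\tilde\alpha$ and the extra powers of $\tilde\phi$ reduces this to the stated threshold $s\gtrsim \tilde T^2 + \varepsilon^{-1}\tilde T^2 + \varepsilon^{1/3}\tilde T^{2/3}$, the $\varepsilon^{1/3}\tilde T^{2/3}$ coming from balancing the curvature terms $|\tilde\alpha_t|, |\tilde\alpha_{tt}|$ against $\tilde\phi^3$. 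I would also verify that the standard properties $|\tilde\phi_x|\lesssim \tilde\phi$, $|\tilde\alpha_t|\lesssim \tilde T\tilde\phi^2$, $|\tilde\alpha_{tt}|\lesssim \tilde T^2 \tilde\phi^3$, $\tilde\alpha_x = -\tilde\phi$ hold for these weights, which is where the precise form of $\eta(x)=2L+x$ and the scaling $\varepsilon^2 t(T-t)$ in the denominator enter.

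**The remaining and genuinely delicate step** is the treatment of the boundary terms on $\sigma_0$ and $\sigma_1$. The integration by parts produces, on each component of $\sigma$, terms involving $\partial_\nu\psi$, $\psi$, $\psi_t$ and products thereof, weighted by powers of $s$ and $\tilde\phi$. On $\sigma_1$ one uses $\tilde\varphi_t = \varepsilon^{-1}\partial_\nu\tilde\varphi$ to eliminate $\tilde\varphi_t$, and on $\sigma_0$ one uses $\varepsilon^2\tilde\varphi_t - \varepsilon\partial_\nu\tilde\varphi = \tilde\varphi$; the goal is to show that all boundary contributions on $\sigma_1$ and the "bad-sign" part on $\sigma_0$ can either be absorbed into the left-hand boundary energy $s^3\int_\sigma\tilde\phi^3 e^{-2s\tilde\alpha}|\tilde\varphi|^2$ (using again that $s$ is large and that $\tilde\alpha(\cdot,-L)<\tilde\alpha(\cdot,x)$ for $x>-L$, so $e^{-2s\tilde\alpha}$ is largest at $x=-L$), or else bounded by the observation term $\int_{\sigma_0}e^{-4s\tilde\alpha+2s\tilde\alpha(\cdot,-L)}\tilde\phi^9|\tilde\varphi|^2$ on the right. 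To extract the $L^2(\sigma_0)$ observation with the stated weight $e^{-4s\tilde\alpha+2s\tilde\alpha(\cdot,-L)}\tilde\phi^9$ and the high power $s^9$, one typically first obtains a cruder estimate with the trace $\partial_\nu\tilde\varphi|_{\sigma_0}$ and $\tilde\varphi|_{\sigma_0}$ appearing, then localizes near $x=0$ with a cutoff and uses an auxiliary (elliptic or energy) estimate on a strip $(-\delta,0)$ to trade the normal-derivative trace for the Dirichlet trace on $\Gamma_0$ at the cost of extra powers of $s\tilde\phi$; the weight $e^{-4s\tilde\alpha+2s\tilde\alpha(\cdot,-L)}$ is exactly $e^{-2s\tilde\alpha}\cdot e^{-2s(\tilde\alpha-\tilde\alpha(\cdot,-L))}$, the second factor being the small parameter that makes this trade possible. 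I expect the careful accounting of signs in these boundary integrals — ensuring every term is either of the right sign or absorbable — to be the main obstacle, and the final passage back from \eqref{Carleman} to \eqref{Carlemana} via the inverse time scaling $t\mapsto \varepsilon t$ to be routine bookkeeping that converts $\tilde T=\varepsilon T$ and recovers the threshold $s_0(\varepsilon T+\varepsilon^2 T^2)$.
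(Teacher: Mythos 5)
Your first stage matches the paper's Proposition~\ref{Carleman classique}: the decomposition $P_1\psi+P_2\psi=P_3\psi$ for $\psi=e^{-s\tilde\alpha}\tilde\varphi$, the twelve cross terms, and the use of the boundary conditions \eqref{enzero}--\eqref{en-L} to convert normal-derivative traces into $\psi_t$ and $\psi$ on the boundary. But your account of the ``genuinely delicate step'' misidentifies the obstacle and proposes a mechanism that does not address it. After the first stage the uncontrolled right-hand side term is not a normal-derivative trace (those are already eliminated by the boundary conditions) but the \emph{time}-derivative trace $(s+\varepsilon)\int_{\sigma_0}\tilde\phi e^{-2s\tilde\alpha}|\tilde\varphi_t|^2$. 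A cutoff near $x=0$ together with an elliptic or energy estimate on a strip $(-\delta,0)$ trades spatial traces for one another; it cannot remove $|\tilde\varphi_t|^2$ on $\sigma_0$, and no choice of $s$ absorbs it into the left-hand side, which contains no boundary term in $\tilde\varphi_t$. The paper's resolution is entirely different: integrate by parts in time (see \eqref{France98}) to produce $\int_{\sigma_0}\tilde\phi^2e^{-2s\tilde\alpha}|\tilde\varphi|\,|\tilde\varphi_{tt}|$, then bound $\tilde\varphi_{tt}$ on the boundary by applying the parabolic regularization estimate of Lemma~\ref{RE} / Remark~\ref{remarqueH2} to the auxiliary function $\zeta=\theta\tilde\varphi_t$ with $\theta=e^{-s\tilde\alpha(\cdot,-L)}\tilde\phi^{-5/2}(\cdot,-L)$, which solves the same system with source $\theta_t\tilde\varphi_t$; the resulting estimate \eqref{final} combined with Cauchy--Schwarz is precisely what generates the power $s^9$ and the weight $e^{-4s\tilde\alpha+2s\tilde\alpha(\cdot,-L)}\tilde\phi^9$. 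Without this device (or an equivalent one) your argument does not close.

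A secondary but consequential error: since $\eta(x)=2L+x$ is increasing and $C>e^{2L}$, the function $\tilde\alpha$ is \emph{decreasing} in $x$, so $\tilde\alpha(\cdot,-L)>\tilde\alpha(\cdot,x)$ for $x>-L$ and $e^{-2s\tilde\alpha}$ is \emph{smallest} at $x=-L$ --- the opposite of what you assert. In particular $e^{-2s(\tilde\alpha-\tilde\alpha(\cdot,-L))}$ evaluated on $\sigma_0$ is a large factor, not a small one; the observation weight exceeds $e^{-2s\tilde\alpha}\tilde\phi^9$ because it is inherited from the weight $\theta$, which is pinned to $x=-L$ in the auxiliary estimate, not because of any smallness that ``makes the trade possible.'' The favourable signs on $\sigma_1$ come instead from $\tilde\alpha_x=-\tilde\phi<0$ together with the orientation of the outward normal (this is where the choice $\eta(x)=2L+x$ matters), so you should rework the boundary bookkeeping with the correct monotonicity before the absorption claims can be trusted.
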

Observe that Proposition~\ref{Th:Carleman} directly implies Theorem~\ref{The:Carleman}.

All along the proof we will need several properties of the weight functions: \ \

\begin{lemma}\label{prop:poids}
{\tmdummy}
  \begin{itemize}
    \item $| \tilde\alpha_t | \lesssim \tilde{T} \tilde\phi^2, \,\,| \tilde\alpha_{x t} | \lesssim
     \tilde{T} \tilde\phi^2, \,\,| \tilde\alpha_{t t} | \lesssim \tilde{T}^2 \tilde\phi^3,$

    \item $\tilde\alpha_x = -  \tilde\phi, \,\,\tilde\alpha_{x x} = - \tilde\phi .$
  \end{itemize}
\end{lemma}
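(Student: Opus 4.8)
The plan is to reduce the whole lemma to elementary one-variable calculus once the time rescaling is made explicit. First I would record that, by the very definitions of $\tilde\alpha$ and $\tilde\phi$, one has $\tilde\alpha(t,x)=(C-e^{\eta(x)})\,\beta(t)$ and $\tilde\phi(t,x)=e^{\eta(x)}\,\beta(t)$ with $\beta(t):=\bigl(t(\tilde T-t)\bigr)^{-1}$, so that all the $x$-dependence is carried by $e^{\eta(x)}$ and all the $t$-dependence by $\beta$. The two identities $\tilde\alpha_x=-\tilde\phi$ and $\tilde\alpha_{xx}=-\tilde\phi$ are then immediate, since $\eta'\equiv 1$ gives $\frac{d}{dx}e^{\eta(x)}=e^{\eta(x)}$.

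For the three inequalities I would first isolate two elementary facts. On the space interval $x\in(-L,0)$ we have $e^{L}\le e^{\eta(x)}\le e^{2L}$, hence $|C-e^{\eta(x)}|$ is bounded and $e^{k\eta(x)}$ is bounded below away from $0$ for every $k\ge 0$; therefore $|C-e^{\eta(x)}|\lesssim e^{k\eta(x)}$ for any such $k$. On the time side I would compute $\beta'(t)=-(\tilde T-2t)\beta(t)^2$ and $\beta''(t)=2\beta(t)^2+2(\tilde T-2t)^2\beta(t)^3$, and then bound them using $|\tilde T-2t|\le \tilde T$ on $(0,\tilde T)$ together with the lower bound $\beta(t)\ge 4/\tilde T^2$; the latter lets one absorb a surplus factor $\beta^2$ into $\tilde T^2\beta^3$, yielding $|\beta'|\lesssim \tilde T\beta^2$ and $|\beta''|\lesssim \tilde T^2\beta^3$.

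Finally I would combine the two steps. Writing $\tilde\alpha_t=(C-e^{\eta})\beta'$, $\tilde\alpha_{tt}=(C-e^{\eta})\beta''$ and $\tilde\alpha_{xt}=-e^{\eta}\beta'$, one gets for instance $|\tilde\alpha_t|=|C-e^{\eta}|\,|\beta'|\lesssim \tilde T\beta^2\lesssim \tilde T\,e^{2\eta}\beta^2=\tilde T\tilde\phi^2$, and likewise $|\tilde\alpha_{xt}|=e^{\eta}|\beta'|\lesssim \tilde T\,e^{2\eta}\beta^2=\tilde T\tilde\phi^2$ and $|\tilde\alpha_{tt}|=|C-e^{\eta}|\,|\beta''|\lesssim \tilde T^2\,e^{3\eta}\beta^3=\tilde T^2\tilde\phi^3$. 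There is essentially no obstacle here; the only points that need care are that the fixed constant $C$ in the weight and the fixed tube length $L$ may be absorbed into the universal constants hidden in $\lesssim$ (they are fixed once and for all, independently of $s$, $\varepsilon$ and $T$), and the bookkeeping bound $\beta(t)\ge 4/\tilde T^2$, which is exactly what makes the power of $\tilde\phi$ and the power of $\tilde T$ balance in each of the three estimates.
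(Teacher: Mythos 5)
Your proof is correct, and it is exactly the routine computation the paper has in mind (the paper states Lemma~\ref{prop:poids} without proof, treating it as elementary): after the time rescaling one has $\tilde\alpha=(C-e^{\eta})\beta$, $\tilde\phi=e^{\eta}\beta$ with $\beta(t)=\bigl(t(\tilde T-t)\bigr)^{-1}$, and your bounds $|\beta'|\le\tilde T\beta^2$, $|\beta''|\lesssim\tilde T^2\beta^3$ (via $\beta\ge 4/\tilde T^2$) together with the two-sided bounds on $e^{\eta}$ and $C-e^{\eta}$ give all five assertions. The only caveat, which you already flag, is that the implicit constants depend on the fixed quantities $L$ and $C$ but not on $\varepsilon$, $T$ or $s$, which is all the subsequent Carleman argument requires.
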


There are essentially two steps in this proof:
\begin{itemize}
\item[-] The first one consists in doing a Carleman estimate similar to that of \cite{FI}. The idea is to compute the $L^2$ product of some operators applied to $e^{-s\tilde\alpha}\tilde\varphi$, do integrations by parts and make a convenient choice of the parameter $s$. This will give an estimate where we still have a boundary term on $\sigma_0$ in the right-hand side.

\item[-] In the second one, we will study the boundary terms appearing in the right hand side due to the boundary conditions. We will perform the proof of this theorem for smooth solutions, so that the general proof follows from a density argument.
\end{itemize}
The big difference with general parabolic equations is the special boundary condition. Up to our knowledge, no Carleman inequalities have been performed when the partial time derivative appears on the boundary condition. The fact that we still have a Carleman inequality for our system comes from the fact that our system remains somehow parabolic.

\vskip0.3cm

We will first estimate the left hand side terms of \eqref{Carleman} like in the classical Carleman estimate (see \cite{FI}). We obtain:
\begin{proposition}\label{Carleman classique}
There exists $C>0$, such that for every $\varepsilon\in (0,1)$ and every $s \geqslant C ( \tilde{T} + \varepsilon^{-1}\tilde T^2+\varepsilon^{1/3}\tilde T^{2/3})$,
the following inequality is satisfied for every solution of (\ref{tildevarphi}) associated to $\tilde\varphi_T\in X$:
\begin{equation}
\begin{array}{l}\displaystyle
s^3\int_q \tilde\phi^3e^{-2s\tilde\alpha}|\tilde\varphi|^2+s^2\int_q \tilde\phi e^{-2s\tilde\alpha}|\tilde\varphi_x|^2
+s^{-1}\int_q \tilde\phi^{-1}e^{-2s\tilde\alpha}(|\tilde\varphi_{xx}|^2+|\tilde\varphi_t|^2)
   \\ \noalign{\medskip}\displaystyle
+ s^3 \int_{\sigma_1}\tilde\phi^3e^{-2s\tilde\alpha} | \tilde\varphi |^2 \lesssim s^5\int_{\sigma_0} \tilde\phi^5e^{-2s\tilde\alpha}|\tilde\varphi|^2+\varepsilon^2 s \int_{\sigma_0} \tilde\phi e^{-2s\tilde\alpha} | \tilde\varphi_t |^2.
\end{array}
\end{equation}
\end{proposition}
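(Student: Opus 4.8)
The plan is to run the classical Fursikov--Imanuvilov argument (\cite{FI}) on the scaled problem \eqref{tildevarphi}, treating with care the drift term $\varepsilon^{-1}\tilde\varphi_x$ and the two boundary conditions (one of them dynamic). We argue for smooth solutions; since $\tilde\alpha\to+\infty$ as $t\to 0,\tilde T$, the function $\psi\assign e^{-s\tilde\alpha}\tilde\varphi$ together with all of its derivatives vanishes at $t=0,\tilde T$, so that every temporal boundary term below disappears. Conjugating the first line of \eqref{tildevarphi} by $e^{-s\tilde\alpha}$ and using Lemma~\ref{prop:poids} (in particular $\tilde\alpha_x=-\tilde\phi$ and $\tilde\alpha_{xx}=-\tilde\phi$, hence $\tilde\phi_x=\tilde\phi$), the equation becomes $P_1\psi+P_2\psi=P_3\psi$ in $q$, where we group the formally self-adjoint and skew-adjoint parts as
\[ P_1\psi\assign \psi_{xx}+s^2\tilde\phi^2\psi+s\tilde\alpha_t\psi,\qquad P_2\psi\assign \psi_t+(\varepsilon^{-1}-2s\tilde\phi)\psi_x-s\tilde\phi\psi, \]
and $P_3\psi\assign\varepsilon^{-1}s\tilde\phi\psi$ is the remainder produced by the drift. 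Taking the $L^2(q)$ norm yields
\[ \|P_1\psi\|_{L^2(q)}^2+\|P_2\psi\|_{L^2(q)}^2+2\,(P_1\psi,P_2\psi)_{L^2(q)}=\|P_3\psi\|_{L^2(q)}^2 . \]

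Next I would expand the cross product $2(P_1\psi,P_2\psi)$ into its nine summands and integrate each one by parts in $x$ and $t$. As in \cite{FI}, the products $s^2\tilde\phi^2\psi\cdot(-2s\tilde\phi\psi_x)$ and $\psi_{xx}\cdot(-2s\tilde\phi\psi_x)$ produce, after one integration by parts and thanks to $\tilde\phi_x=\tilde\phi$, the positive dominant interior terms $\gtrsim s^3\int_q\tilde\phi^3|\psi|^2$ and $\gtrsim s\int_q\tilde\phi|\psi_x|^2$; all the remaining interior contributions carry either fewer powers of $s$ or an extra factor $\tilde T$ (coming from $|\tilde\alpha_t|\lesssim\tilde T\tilde\phi^2$, $|\tilde\alpha_{tt}|\lesssim\tilde T^2\tilde\phi^3$, $|\tilde\alpha_{xt}|\lesssim\tilde T\tilde\phi^2$ of Lemma~\ref{prop:poids}), and they are absorbed by the dominant terms once $s\gtrsim\tilde T+\tilde T^2$. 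The genuinely new ingredient is the drift: the term $\|P_3\psi\|^2=\varepsilon^{-2}s^2\int_q\tilde\phi^2|\psi|^2$ and the cross product $s^2\tilde\phi^2\psi\cdot\varepsilon^{-1}\psi_x$ (which integration by parts turns into $-\varepsilon^{-1}s^2\int_q\tilde\phi^2|\psi|^2$) are both of the same size, and absorbing them by $s^3\int_q\tilde\phi^3|\psi|^2$ requires $s\tilde\phi\gtrsim\varepsilon^{-1}$, i.e. $s\gtrsim\varepsilon^{-1}\tilde T^2$ since $\tilde\phi\gtrsim\tilde T^{-2}$; the other drift cross product $\psi_{xx}\cdot\varepsilon^{-1}\psi_x$ is a perfect $x$-derivative and contributes only boundary terms. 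To recover the terms $s^{-1}\int_q\tilde\phi^{-1}(|\tilde\varphi_{xx}|^2+|\tilde\varphi_t|^2)$ and the stated weights of the gradient term, I would then add a small multiple of $\|P_1\psi\|^2+\|P_2\psi\|^2$, use the equation once more to trade $\tilde\varphi_{xx}$ for $\tilde\varphi_t+\varepsilon^{-1}\tilde\varphi_x$, and exploit the square $(\varepsilon^{-1}-2s\tilde\phi)^2$ inside $P_2\psi$; controlling the residual drift terms at this stage (a Young inequality of the type $\varepsilon\tilde\phi^2|\psi|^2\lesssim\delta\,s^3\tilde\phi^3|\psi|^2$, using again $\tilde\phi\gtrsim\tilde T^{-2}$) is what pins down the last condition $s\gtrsim\varepsilon^{1/3}\tilde T^{2/3}$.

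Finally come the boundary terms, which I expect to be the main obstacle. On $\sigma_1$ one uses the dynamic condition of \eqref{tildevarphi}, which at $x=-L$ reads $\tilde\varphi_x=-\varepsilon\tilde\varphi_t$, i.e. $\psi_x=s\tilde\phi\psi-\varepsilon(\psi_t+s\tilde\alpha_t\psi)$ on $\sigma_1$; substituting this into the boundary integral collected at $x=-L$, the leading part is $\gtrsim s^3\int_{\sigma_1}\tilde\phi^3|\psi|^2$ with the right sign, while the remaining, $\varepsilon$-small pieces (of the type $\varepsilon^2 s\int_{\sigma_1}\tilde\phi|\psi_t|^2$) are dominated by it or by the interior estimate; establishing this — the good sign of the complete boundary contribution and the absorption of the $\psi_t$-traces on $\sigma_1$ — is the delicate point. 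On $\sigma_0$ one uses the other condition, which at $x=0$ reads $\tilde\varphi_x=\varepsilon\tilde\varphi_t-\varepsilon^{-1}\tilde\varphi$, to express $\psi_x$ there in terms of $\psi$ and $\psi_t$; the boundary integral at $x=0$ then becomes a combination of $\int_{\sigma_0}\tilde\phi^k|\psi|^2$ and $\int_{\sigma_0}\tilde\phi^k|\psi_t|^2$ which, comparing powers of $\tilde\phi$ via $\tilde\phi\gtrsim\tilde T^{-2}$, is bounded by $s^5\int_{\sigma_0}\tilde\phi^5|\psi|^2+(s+\varepsilon)\int_{\sigma_0}\tilde\phi|\psi_t|^2$, and this whole quantity is moved to the right-hand side. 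Undoing the substitution $\psi=e^{-s\tilde\alpha}\tilde\varphi$ (so that $\psi_x=e^{-s\tilde\alpha}(\tilde\varphi_x+s\tilde\phi\tilde\varphi)$ and $\psi_t=e^{-s\tilde\alpha}(\tilde\varphi_t-s\tilde\alpha_t\tilde\varphi)$) and absorbing the extra weight factors into the already-controlled dominant terms then yields the estimate of Proposition~\ref{Carleman classique} for smooth solutions; a density argument extends it to every $\tilde\varphi_T\in X$.
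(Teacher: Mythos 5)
Your overall strategy (conjugation by $e^{-s\tilde\alpha}$, splitting the conjugated operator into a second--order block and a first--order block, expanding the cross product, then handling the traces through the two boundary conditions) is the same as the paper's, but your specific grouping contains a quantitative error that breaks the absorption step. You place the drift--induced zeroth--order term $\varepsilon^{-1}s\tilde\alpha_x\psi=-\varepsilon^{-1}s\tilde\phi\,\psi$ into the remainder $P_3\psi$, so the basic identity puts $\|P_3\psi\|^2_{L^2(q)}=\varepsilon^{-2}s^2\int_q\tilde\phi^2|\psi|^2$ on the right--hand side. Absorbing this into the leading term $s^3\int_q\tilde\phi^3|\psi|^2$ requires $\varepsilon^{2}s\tilde\phi\gtrsim 1$ pointwise, and since $\tilde\phi\sim\tilde T^{-2}$ at $t=\tilde T/2$ this forces $s\gtrsim\varepsilon^{-2}\tilde T^{2}$ --- strictly stronger than the hypothesis $s\gtrsim\varepsilon^{-1}\tilde T^{2}$ of the Proposition (for $s\sim\varepsilon^{-1}\tilde T^{2}$ one gets $\varepsilon^{2}s\tilde\phi\sim\varepsilon\to 0$). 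Your claim that this term is ``of the same size'' as the cross term $\varepsilon^{-1}s^2\int_q\tilde\phi^2|\psi|^2$ is therefore off by a factor $\varepsilon^{-1}$. The paper makes the opposite choice: it keeps $\varepsilon^{-1}s\tilde\alpha_x\psi$ inside the second--order block and relegates only the harmless $s\tilde\alpha_{xx}\psi$ to $P_3$, so that the dangerous $\varepsilon^{-2}$ contribution appears only as the cross product of $\varepsilon^{-1}\psi_x$ with $\varepsilon^{-1}s\tilde\alpha_x\psi$, which after one integration by parts equals $+\tfrac12\varepsilon^{-2}s\int_q\tilde\phi|\psi|^2$ plus a trace on $\sigma_0$ that joins the control terms; i.e.\ it comes with a favorable sign. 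You would need to regroup in this way (or otherwise dispose of $\varepsilon^{-2}s^2\int_q\tilde\phi^2|\psi|^2$) before the rest of your argument can proceed.

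A second, lesser issue is that the step you yourself call ``the delicate point'' --- the sign of the full boundary contribution on $\sigma_1$ and the absorption of the $\psi_t$--traces there --- is only asserted, not carried out. In the paper this works because the trace at $x=-L$ of the term $s^3\int_q\tilde\alpha_x^3(|\psi|^2)_x$ is $+s^3\int_{\sigma_1}\tilde\phi^3|\psi|^2$ (the monotonicity of $\eta$ matters here, exactly as you note for the interior), the $\psi_t$--trace produced by $\int_q\psi_t\psi_{xx}$ acquires a good sign once the dynamic condition at $x=-L$ is inserted, and the leftover pieces such as $s^2\tilde T\int_{\sigma_1}\tilde\phi^3|\psi|^2$ and $\varepsilon^{-1}\tilde T s\int_{\sigma_1}\tilde\phi^2|\psi|^2$ are absorbed by $s^3\int_{\sigma_1}\tilde\phi^3|\psi|^2$ under the stated lower bound on $s$. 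These computations, together with the explicit inventory of the $\sigma_0$--traces that produce the precise right--hand side $s^5\int_{\sigma_0}\tilde\phi^5e^{-2s\tilde\alpha}|\tilde\varphi|^2+(s+\varepsilon)\int_{\sigma_0}\tilde\phi e^{-2s\tilde\alpha}|\tilde\varphi_t|^2$, must appear for the proof to be complete.
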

\begin{proof}

Let us introduce $\psi \assign \tilde{\varphi} e^{- s \tilde\alpha}$. We state the equations satisfied by $\psi$. In $(0,\tilde T)\times (-L,0)$, we have the identity
\[ P_1 \psi + P_2 \psi = P_3 \psi, \]
where
\begin{equation}\label{P1}
P_1 \psi = \psi_t + 2 s \tilde\alpha_x \psi_x+\varepsilon^{-1}\psi_x,
\end{equation}
\begin{equation}\label{P2}
P_2 \psi = \psi_{x x} + s^2 \tilde\alpha^2_x \psi + s \tilde\alpha_t \psi+\varepsilon^{-1}s\tilde\alpha_x\psi,
\end{equation}
and
$$
P_3 \psi= -s \tilde\alpha_{x x} \psi.
$$
On the other hand, the boundary conditions are:
\begin{equation}\label{enzero}
  \varepsilon^2(\psi_t + s \tilde\alpha_t \psi -\varepsilon^{-1}(\psi_x + s \tilde\alpha_x
  \psi)) - \psi = 0\,\,\,\,\text{on }x=0,
\end{equation}
\begin{equation}\label{en-L}
\varepsilon(\psi_t + s \tilde\alpha_t \psi )+ \psi_x + s \tilde\alpha_x \psi = 0\,\,\,\,\text{on }x=-L.
\end{equation}
We take the $L^2$ norm in both sides of the identity in $q$:
\begin{equation}\label{L2}
\|P_1\psi\|^2_{L^2(q)}+\|P_2\psi\|^2_{L^2(q)}+2(P_1\psi,P_2\psi)_{L^2(q)}=\|P_3\psi\|^2_{L^2(q)}.
\end{equation}
Using Lemma~\ref{prop:poids}, we directly obtain
\[ \left. \left. \right\| P_3 \psi \right\|^2_{L^2 (q)} \lesssim
    s^2
   \int_q \tilde\phi^2 | \psi |^2. \]

We focus on the expression of the double product $(P_1 \psi, P_2 \psi)_{L^2 (q)}$. This product contains 12 terms which will be denoted by $T_{i j} (\psi)$ for $1 \leqslant i \leqslant 3$, $1 \leqslant j \leqslant 4$. We study them successively.
\begin{itemize}
  \item An integration by parts in space and then in time shows that, since $\psi (0, .) = \psi ( \tilde{T}, .) = 0,$ we have
  \begin{eqnarray*}
       T_{1 1} (\psi) = \int_q \psi_t
       \psi_{x x} = -\frac{1}{2} \int_q
       ( | \psi_x |^2)_t + \int_{\sigma_0} \psi_x \psi_t -
       \int_{\sigma_1} \psi_x \psi_t &  & \\
       = \int_{\sigma_0} \psi_x \psi_t -
       \int_{\sigma_1} \psi_x \psi_t. &  &
   \end{eqnarray*}
  Now, we use the boundary condition (\ref{enzero}). We have
  \[ \int_{\sigma_0} \psi_x \psi_t \gtrsim \varepsilon\int_{\sigma_0} | \psi_t |^2 -\varepsilon \tilde T^2s\int_{\sigma_0} \tilde\phi^3| \psi |^2      -\tilde T s\int_{\sigma_0} \tilde\phi^2|\psi|^2. \]
  Thanks to the fact that $\psi (0, .) = \psi ( \tilde{T}, .) = 0$, the same computations can be done on $x=-L$ using (\ref{en-L}) so we obtain the following for this term:
  $$
  T_{1 1} (\psi) \gtrsim -\varepsilon \tilde T^2s\int_{\sigma} \tilde\phi^3| \psi |^2
     -\tilde Ts\int_{\sigma} \tilde\phi^2| \psi |^2.
  $$
  \item Integrating by parts in time, we find
  \[ T_{1 2} (\psi) = \frac{s^2}{2} \int_q \tilde\alpha_x^2 ( | \psi |^2)_t = -s^2 \int_q \tilde\alpha_x \tilde\alpha_{x t} | \psi |^2, \]
     $$
     T_{1 3} (\psi) = \frac{s}{2} \int_q \tilde\alpha_t ( | \psi |^2)_t = -\frac{s}{2} \int_q \tilde\alpha_{tt} | \psi |^2,
     $$
  \[ T_{1 4} (\psi) = \varepsilon^{-1}\frac{s}{2} \int_q \tilde\alpha_x ( | \psi |^2)_t = - \varepsilon^{-1}\frac{s}{2}
     \int_q \tilde\alpha_{t x} | \psi |^2, \]
  and using Lemma~\ref{prop:poids}, we get
  \[ T_{1 2} (\psi) \gtrsim -  \tilde{T}s^2 \int_q \tilde\phi^3 | \psi |^2, \]
     $$
     T_{1 3} (\psi) \gtrsim - \tilde{T}^2s\int_q \tilde\phi^3 | \psi |^2,
     $$
  \[ T_{1 4} (\psi) \gtrsim -  \varepsilon^{-1}\tilde{T}s \int_q \tilde\phi^2 | \psi |^2 . \]
  \item Now, integrating by parts in space, we have
  \[ T_{2 1} (\psi) = s \int_q \tilde\alpha_x ( | \psi_x |^2)_x = - s \int_q \tilde\alpha_{x x} | \psi_x |^2 + s \int_{\sigma_0}
        \tilde\alpha_x | \psi_x |^2 - s \int_{\sigma_1} \tilde\alpha_x | \psi_x
        |^2 . \]
  Thanks to Lemma~\ref{prop:poids} (the choice of $\eta$ is important here), the last term is positive. Using the boundary condition (\ref{enzero}) and Lemma~\ref{prop:poids}, we finally get
  $$
  \begin{array}{l}\displaystyle
  T_{2 1} (\psi) \gtrsim s \int_q \tilde\phi | \psi_x |^2 - s^3 \int_{\sigma_0}
     \tilde\phi^3 | \psi |^2 - \varepsilon^2s^3  \tilde{T}^2 \int_{\sigma_0}\tilde\phi^5 | \psi |^2
       + s\int_{\sigma_1} \tilde\phi | \psi_x |^2- \varepsilon^2s  \int_{\sigma_0} \tilde\phi | \psi_t |^2.
  \end{array}
  $$
  \item An integration by parts in space provides
  $$
  \begin{array}{l}\displaystyle
  T_{2 2} (\psi) = s^3 \int_q
     \tilde\alpha_x^3 ( | \psi |^2)_x = - 3 s^3 \int_q \tilde\alpha_x^2 \tilde\alpha_{x x} | \psi |^2 + s^3
     \int_{\sigma_0} \tilde\alpha^3_x | \psi |^2 - s^3 \int_{\sigma_1}
     \tilde\alpha_x^3 | \psi |^2.
  \end{array}
  $$
  This readily yields
  $$
  T_{2 2} (\psi) \gtrsim s^3 \int_q \tilde\phi^3 | \psi |^2 + s^3 \int_{\sigma_1}
     \tilde\phi^3 | \psi |^2 - s^3
     \int_{\sigma_0} \tilde\phi^3 | \psi |^2.
  $$
  \item Again an integration by parts in space gives
  $$
  \begin{array}{l}\displaystyle
  T_{2 3} (\psi) = s^2 \int_q
     \tilde\alpha_t \tilde\alpha_x ( | \psi |^2)_x
= - s^2 \int_q (\tilde\alpha_t \tilde\alpha_x)_x | \psi |^2 + s^2
     \int_{\sigma_0}  \tilde\alpha_t \tilde\alpha_x | \psi |^2
      - s^2     \int_{\sigma_1} \tilde\alpha_t \tilde\alpha_x | \psi |^2.
  \end{array}
  $$
  Using Lemma~\ref{prop:poids}, we obtain
  \[ T_{2 3} (\psi) \gtrsim - s^2 \tilde{T} \int_q \tilde\phi^3 | \psi |^2 - s^2  \tilde{T}
     \int_{\sigma_0} \tilde\phi^3 | \psi |^2 - s^2  \tilde{T}
     \int_{\sigma_1} \tilde\phi^3 | \psi |^2 . \]
     \item The last integral concerning the second term in the expression of $P_1\psi$ is
     $$
     T_{24}(\psi)=\varepsilon^{-1}s^2 \int_q \tilde\alpha_x^2(|\psi|^2)_x.
     $$
     After an integration by parts in space, we get
     $$
     T_{24}(\psi)\geq -2\varepsilon^{-1}s^2\int_q \tilde\phi^2|\psi|^2-
     \varepsilon^{-1}s^2\int_{\sigma_1} \tilde\phi^2|\psi|^2.
     $$
     \item We consider now the third term in the expression of $P_1\psi$. We have
     $$
     T_{31}(\psi)=\frac{\varepsilon^{-1}}{2}\int_q (|\psi_x|^2)_x\geq -\frac{\varepsilon^{-1}}{2}\int_{\sigma_1}|\psi_x|^2.
     $$

     \item Now, we integrate by parts with respect to $x$ and we have

     \begin{eqnarray*}
     T_{32}(\psi)&=&\frac{\varepsilon^{-1}}{2}s^2\int_q \tilde\alpha_x^2(|\psi|^2)_x\geq-\varepsilon^{-1}s^2\int_q
     \tilde\alpha_x\tilde\alpha_{xx}|\psi|^2-\frac{\varepsilon^{-1}}{2}s^2\int_{\sigma_1}\tilde\alpha_x^2|\psi|^2
     \\      &\geq& -\varepsilon^{-1}s^2\int_q\tilde\phi^2|\psi|^2-\frac{\varepsilon^{-1}}{2}s^2\int_{\sigma_1}\tilde\phi^2|\psi|^2.
     \end{eqnarray*}

     \item Then, using another integration by parts in $x$ we obtain
     $$
     \begin{array}{l}\displaystyle
     T_{33}(\psi)=\frac{\varepsilon^{-1}}{2}s\int_q \tilde\alpha_t(|\psi|^2)_x\gtrsim -\varepsilon^{-1}s\tilde T\int_q \tilde\phi^2|\psi|^2-\varepsilon^{-1}\tilde Ts\int_{\sigma_1}\tilde\phi^2|\psi|^2 -\varepsilon^{-1}\tilde Ts\int_{\sigma_0} \tilde\phi^2|\psi|^2.
     \end{array}
     $$
     \item Finally, arguing as before, we find
     $$
     \begin{array}{l}\displaystyle
     T_{34}(\psi)=\frac{\varepsilon^{-2}}{2}s\int_q \tilde\alpha_x(|\psi|^2)_x\geq \frac{\varepsilon^{-2}}{2}s^2\int_q \tilde\phi|\psi|^2-C\varepsilon^{-2}s\int_{\sigma_0} \tilde\phi|\psi|^2
     \gtrsim -\varepsilon^{-2}s\int_{\sigma_0}\tilde\phi|\psi|^2.
     \end{array}
     $$
\end{itemize}

Putting together all the terms and combining the resulting inequality with (\ref{L2}), we obtain
\begin{equation}\label{L2bis}
\|P_1\psi\|^2_{L^2(q)}+\|P_2\psi\|^2_{L^2(q)}+I_1 (\psi) + I_2 (\psi_x) \lesssim J_1 (\psi) +J_2 (\psi_t) +J_3 (\psi_x)+ L (\psi)
\end{equation}
where the main terms are
$$
I_1 (\psi) = s^3 \int_q   \tilde\phi^3 | \psi |^2+ s^3 \int_{\sigma_1} \tilde\phi^3 | \psi |^2,\quad
   I_2 (\psi_x) = s   \int_q \tilde\phi | \psi_x |^2+s \int_{\sigma_1} \tilde\phi | \psi_x |^2,
$$
the right hand side terms are
$$
\begin{array}{l}\displaystyle
 J_1 (\psi) = (\varepsilon^{-1}\tilde Ts+\varepsilon^{-1}s^2) \int_q \tilde\phi^2 | \psi |^2 + (s^2 \tilde{T} + s
   \tilde{T}^2) \int_q \tilde\phi^3 | \psi
   |^2
   \\ \noalign{\medskip}\displaystyle + (s  \tilde{T}+s^2\varepsilon^{-1}+\varepsilon^{-1}\tilde{T}s) \int_{\sigma_1} \tilde\phi^2 | \psi |^2
  + (s^2  \tilde{T} +\varepsilon s \tilde{T}^2)
   \int_{\sigma_1} \tilde\phi^3 | \psi |^2,
\end{array}
$$
and
$$
J_2 (\psi_t) =\varepsilon^2 s \int_{\sigma_0} \tilde\phi | \psi_t |^2 ,\quad J_3 (\psi_x) =\varepsilon^{-1} \int_{\sigma_1} | \psi_x |^2
$$
and the control terms are
$$
\begin{array}{l}\displaystyle
L (\psi) =\varepsilon^{-2} s\int_{\sigma_0}\tilde\phi|\psi|^2+(s  \tilde{T}+\varepsilon^{-1}\tilde T s )\int_{\sigma_0} \tilde\phi^2 | \psi |^2 +
   ( s^3 + s^2  \tilde{T}+\varepsilon \tilde T^2s) \int_{\sigma_0} \tilde\phi^3 | \psi |^2 +\varepsilon^2s^3\tilde T^2 \int_{\sigma_0} \tilde\phi^5 | \psi |^2.
\end{array}
$$
\vskip0.5cm
Let us now see that we can absorb some right hand side terms with the help of the parameter $s$.
\vskip 0.2 cm $\bullet$ First, we see that the distributed terms in $J_1(\psi)$ can be absorbed by the first term in the definition of $I_1(\psi)$ for a choice of $s\gtrsim \varepsilon^{-1/2}\tilde T^{3/2}+\varepsilon^{-1}\tilde{T}^2$.

\vskip0.1cm $\bullet$ Second, we use the second term of $I_1(\psi)$ in order to absorb the integrals in the second line of the definition of $J_1(\psi)$.
We find that this can be done as long as $s\gtrsim \varepsilon^{-1}\tilde T^2+\varepsilon^{-1/2}\tilde T^{3/2}$ since $\varepsilon<1$.
\vskip0.2cm $\bullet$ Next, we observe that, provided $s\gtrsim \varepsilon^{-1}\tilde T^2$, the second term in $J_2(\psi_x)$ absorbs $J_3(\psi_x)$.

Moreover, we observe that all the control terms can be bounded in the following way:
$$
|L(\psi)|\lesssim s^5\int_{\sigma_0} \tilde\phi^5|\psi|^2
$$
as long as $s\gtrsim \tilde T(1+\varepsilon^{-1}\tilde T)$, using that $\varepsilon<1$.
\vskip0.5cm
 Next, we use the expression of $P_1\psi$ and $P_2\psi$ (see (\ref{P1})-(\ref{P2})) in order to obtain some estimates for the terms $\psi_{t}$ and $\psi_{xx}$ respectively:
$$
\begin{array}{l}\displaystyle
s^{-1}\int_q \tilde\phi^{-1}|\psi_t|^2\lesssim s^2\int_q \tilde\phi|\psi_x|^2
+s^{-1}\varepsilon^{-2}\int_q \tilde\phi^{-1}|\psi_x|^2+s^{-1}\int_q \tilde\phi^{-1}|P_1\psi|^2
\lesssim s^2\int_q \tilde\phi|\psi_x|^2
+\int_q |P_1\psi|^2
\end{array}
$$
for any $s\gtrsim \varepsilon^{-1}\tilde T^2$ and
$$
\begin{array}{l}\displaystyle
s^{-1}\int_q \tilde\phi^{-1}|\psi_{xx}|^2\lesssim s^3\int_0^{\tilde T}\int_{-L}^0\tilde\phi^3|\psi|^2
+s\tilde T^{2}\int_q \tilde\phi^{3}|\psi_x|^2+\varepsilon^{-2}s^2\int_q \tilde\phi|\psi|^2
\\ \noalign{\medskip}\displaystyle
\hskip2cm+s^{-1}\int_q \tilde\phi^{-1}|P_2\psi|^2\lesssim s^3\int_q \tilde\phi^3|\psi|^2+s^2\int_q \tilde\phi|\psi_x|^2
+\int_q |P_2\psi|^2,
\end{array}
$$
for $s\gtrsim \tilde T+\varepsilon^{-1}\tilde T^2$.

\noindent Combining all this with (\ref{L2bis}), we obtain
\begin{equation}\label{CI}
\begin{array}{l}\displaystyle
s\int_q \tilde\phi(s^2\tilde\phi^2|\psi|^2+|\psi_x|^2)
+s^{-1}\int_q \tilde\phi^{-1}(|\psi_{xx}|^2+|\psi_t|^2)+ s^3 \int_{\sigma_1} \tilde\phi^3 | \psi |^2
   \\ \noalign{\medskip}\displaystyle
\lesssim s^5\int_{\sigma_0}\tilde\phi^5|\psi|^2+\varepsilon^2s \int_{\sigma_0} \tilde\phi | \psi_t |^2,
\end{array}
\end{equation}
for $s\gtrsim \tilde T(1+\varepsilon^{-1}\tilde T)$.

\vskip0.5cm
Finally, we come back to our variable $\tilde{\varphi}$. We first remark that $\psi_x=e^{-s\tilde\alpha}(\tilde\varphi_x+s\tilde\phi\tilde\varphi)$ and so
$$
s\int_q \tilde\phi e^{-2s\tilde\alpha}|\tilde\varphi_x|^2\lesssim s\int_q \tilde\phi|\psi_x|^2+s^3\int_q \tilde\phi^3|\psi|^2.
$$
Then, we have that $\psi_t=e^{-s\tilde\alpha}(\tilde\varphi_t-s\tilde\alpha_t\tilde\varphi)$, hence
$$
\begin{array}{l}\displaystyle
s^{-1}\int_q \tilde\phi^{-1}e^{-2s\tilde\alpha}|\tilde\varphi_t|^2\lesssim s^{-1}\int_q \tilde\phi^{-1}|\psi_t|^2+s\tilde T^2\int_q \tilde\phi^3|\psi|^2
\lesssim s^{-1}\int_q \tilde\phi^{-1}|\psi_t|^2+s^3\int_q \tilde\phi^3|\psi|^2
\end{array}
$$
for $s\gtrsim \tilde T$. Analogously, we can prove that
$$
s^{-1}\int_0^{\tilde T}\int_q \tilde\phi^{-1}e^{-2s\tilde\alpha}|\tilde\varphi_{xx}|^2\lesssim s^{-1}\int_q \tilde\phi^{-1}|\psi_{x x}|^2+s^3\int_q \tilde\phi^3|\psi|^2+s^2\int_q \tilde\phi|\psi_x|^2,
$$
for $s\gtrsim \tilde T^2$.

\noindent We combine this with (\ref{CI}) and we obtain the required result
\begin{equation}\label{CI2}
\begin{array}{l}\displaystyle
s^3\int_q \tilde\phi^3e^{-2s\tilde\alpha}|\tilde\varphi|^2+s\int_q \tilde\phi e^{-2s\tilde\alpha}|\tilde\varphi_x|^2
+s^{-1}\int_q \tilde\phi^{-1}e^{-2s\tilde\alpha}(|\tilde\varphi_{xx}|^2+|\tilde\varphi_t|^2)+ s^3 \int_{\sigma_1}\tilde\phi^3e^{-2s\tilde\alpha} | \tilde\varphi |^2
   \\ \noalign{\medskip}\displaystyle
 \lesssim s^5\int_{\sigma_0} \tilde\phi^5e^{-2s\tilde\alpha}|\tilde\varphi|^2+\varepsilon^2 s \int_{\sigma_0} \tilde\phi e^{-2s\tilde\alpha} | \tilde\varphi_t |^2,
\end{array}
\end{equation}
for $s\gtrsim \tilde T(1+\varepsilon^{-1}\tilde T)+\varepsilon^{1/3}\tilde T^{2/3}$ and using that
$$
\varepsilon^2 s^3\tilde T^2 \int_{\sigma_0} \tilde\phi^5 e^{-2s\tilde\alpha} | \tilde\varphi|^2 \lesssim s^5\int_{\sigma_0} \tilde\phi^5e^{-2s\tilde\alpha}|\tilde\varphi|^2
$$
which is true for $s\gtrsim \tilde T$ (recall that $\varepsilon<1$).
\end{proof}

With this result, we will now finish the proof of Proposition~\ref{Th:Carleman}.
\paragraph{Estimate of the boundary term}

In this paragraph we will estimate the boundary term
$$
\varepsilon^2 s \int_{\sigma_0} \tilde\phi e^{-2s\tilde\alpha} | \tilde\varphi_t |^2 .$$
After an integration by parts in time, we get
\begin{equation}\label{France98}
\begin{array}{l}\displaystyle
\varepsilon^2s\int_{\sigma_0} \tilde\phi e^{-2s\tilde\alpha} | \tilde\varphi_t |^2=\frac{\varepsilon^2}{2}s\int_{\sigma_0} (\tilde\phi e^{-2s\tilde\alpha})_{tt} | \tilde\varphi |^2
-\varepsilon^2 s\int_{\sigma_0} \tilde\phi e^{-2s\tilde\alpha} \tilde\varphi\, \tilde\varphi_{tt}
\\ \noalign{\medskip}\displaystyle
\lesssim \tilde{T}^2 \varepsilon^2 s^3\int_{\sigma_0} \tilde\phi^5 e^{-2s\tilde\alpha}|\tilde\varphi|^2+\varepsilon^2s\int_{\sigma_0} \tilde\phi e^{-2s\tilde\alpha} |\tilde\varphi|\, |\tilde\varphi_{tt}|,
\end{array}
\end{equation}
for $s\gtrsim \tilde{T}+\tilde{T}^2$. In order to estimate the second time derivative at $x=0$, we will apply some a priori estimates for the adjoint system.
Indeed, let us consider the following function
$$
\zeta(t,x):=\theta(t)\tilde\varphi_t:=e^{-s\tilde\alpha(t,-L)}\tilde\phi^{-5/2}(t,-L)\tilde\varphi_t(t,x).
$$
Then, this function fulfills the following system
\begin{equation*}
\left\{
\begin{array}{l}
       \zeta_t +{\varepsilon}^{-1}\zeta_x + \zeta_{x x} = \theta_t\tilde\varphi_t \\
       \varepsilon^2(\zeta_t - \varepsilon^{-1}\partial_{\nu} \zeta)-\zeta = \varepsilon^2\theta_t\tilde\varphi_t   \\
       \zeta_t - \varepsilon^{-1}\partial_{\nu} \zeta = \theta_t\tilde\varphi_t \\
       \zeta_{|t=T} = 0
\end{array}
\right.
\left.
\begin{array}{l}
\text{in } (0, \tilde{T})\times (-L,0),\\
\text{on } (0, \tilde{T})\times \{0\},\\
\text{on } (0, \tilde{T})\times \{-L\},\\
\text{in } (- L, 0).
\end{array}
\right.
\end{equation*}
Using Remark~\ref{remarqueH2} for $(t,x) \longmapsto\zeta(\varepsilon t,x)$ we find in particular
$$
\varepsilon\int_{\sigma}|\zeta_{t}|^2\lesssim\int_q |\theta_t|^2|\tilde\varphi_t|^2+\varepsilon^2\int_{\sigma} (\theta_t)^2|\tilde\varphi_t|^2.
$$
This directly implies that
$$
\varepsilon\int_{\sigma} \theta^2|\tilde\varphi_{tt}|^2\lesssim\int_q |\theta_t|^2|\tilde\varphi_t|^2+\varepsilon^2\int_{\sigma} (\theta_t)^2|\tilde\varphi_t|^2.
$$
Integrating by parts in time in the last integral, we have
$$
\begin{array}{l}\displaystyle
\varepsilon\int_{\sigma} \theta^2|\tilde\varphi_{tt}|^2\lesssim \left(\int_q |\theta_t|^2|\tilde\varphi_t|^2+\varepsilon^2\int_{\sigma} ((\theta_t)^2)_{tt}|\tilde\varphi|^2
+\varepsilon^3\int_{\sigma} (\theta_t)^4\theta^{-2}|\tilde\varphi|^2
\right)+\frac{\varepsilon}{2}\int_{\sigma} \theta^2|\tilde\varphi_{tt}|^2.
\end{array}
$$
From the definition of $\theta(t)$ and multiplying the previous inequality by $s^{-3}$, we find that (since $\varepsilon<1$)
\begin{eqnarray}\label{final}
\varepsilon s^{-3}\int_{\sigma}(e^{-2s\tilde\alpha}\tilde\phi^{-5})(.,-L)|\tilde\varphi_{tt}|^2 &\lesssim&
\tilde T^2\left(s^{-1} \int_q \tilde\phi^{-1}e^{-2s\tilde\alpha}|\tilde\varphi_t|^2+\varepsilon^2 \tilde T^2 s\int_{\sigma}(\tilde\phi^3e^{-2s\tilde\alpha})(.,-L) |\tilde\varphi|^2 \right) \nonumber \\
&\lesssim &
\tilde T^2\left(s^{-1} \int_q \tilde\phi^{-1}e^{-2s\tilde\alpha}|\tilde\varphi_t|^2+ s^3\int_{\sigma}(\tilde\phi^3e^{-2s\tilde\alpha})(.,-L) |\tilde\varphi|^2 \right)
\end{eqnarray}
for $s\gtrsim \varepsilon \tilde T$. Using Cauchy-Schwarz inequality in the last term of the right hand side of (\ref{France98}), we obtain
$$
\varepsilon^2 s\int_{\sigma_0} \tilde\phi e^{-2s\tilde\alpha} |\tilde\varphi|\, |\tilde\varphi_{tt}|\lesssim \varepsilon\tilde T^{-2}s^{-3}\int_{\sigma}(e^{-2s\tilde\alpha}\tilde\phi^{-5})(.,-L)|\tilde\varphi_{tt}|^2+\varepsilon^{3}\tilde T^2s^5\int_{\sigma_0}e^{-4s\tilde\alpha+2s\tilde\alpha(.,-L)}\tilde\phi^7|\tilde\varphi|^2
$$
Combining this with (\ref{final}) and (\ref{CI2}) yields the desired inequality (\ref{Carleman}).
\noindent $\square$

\section{Observability and control}

In this section, we prove Theorem~\ref{Seville}.

\subsection{Dissipation and observability result}

Our first goal, as in \cite{CG}, will be to get some dissipation result. Even if we will only use this result in dimension one, we present it in dimension $n$ for the
sake of completeness (see also \cite{Dan}).

\begin{proposition}\label{dissi}
  For every $\varepsilon \in (0,1)$, for every time
  $t_1, t_2 > 0$ such that $t_2 - t_1 > L$ and for every weak solution
  $\varphi$ of $(S')^n$, the following estimate holds
  \[ \| \varphi (t_1)\|_X \leqslant \exp\left\{- \frac{(t_2 - t_1 -
     L)^2}{4\varepsilon (t_2-t_1)}\right\} \| \varphi (t_2)\|_X . \]
\end{proposition}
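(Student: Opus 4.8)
The plan is to derive a differential inequality for the weighted energy $\|\varphi(t)\|_X^2$ along solutions of $(S')$ and then integrate it. The natural starting point is the same energy identity already used in the well-posedness section: multiplying the equation $\varphi_t + \partial_n\varphi + \varepsilon\Delta\varphi = 0$ by $\varphi$, integrating over $\Omega=(-L,0)$, applying Green's formula, and inserting the boundary conditions on $\Gamma_0$ and $\Gamma_1$ should give an identity of the form
\[
\frac{1}{2}\partial_t\|\varphi(t)\|_X^2 = \varepsilon\int_\Omega|\nabla\varphi|^2 - \tfrac{1}{2}\int_{\partial\Omega}|\varphi|^2 + (\text{boundary terms from }\partial_n),
\]
the precise signs being dictated by the adjoint boundary conditions. (One works with strong solutions and passes to weak solutions by the density argument mentioned after the well-posedness proposition.)

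The key point is that we are \emph{not} after the coercive dissipation $-\varepsilon\int|\nabla\varphi|^2$ that gives parabolic smoothing, but rather the \emph{transport} dissipation hidden in the term $\int_\Omega \partial_n\varphi\,\varphi = \tfrac12\int_\Omega\partial_n(|\varphi|^2) = \tfrac12(|\varphi(0)|^2 - |\varphi(-L)|^2)$, together with the boundary contributions. The strategy is to absorb the bad term $\varepsilon\int_\Omega|\nabla\varphi|^2$ and the sign-indefinite boundary pieces by introducing a \emph{space-dependent multiplier}: instead of multiplying by $\varphi$, multiply by $e^{\lambda(x)}\varphi$ for a suitable affine (or otherwise monotone) weight $\lambda(x)$, chosen so that after integration by parts the viscous term comes with a favourable sign relative to a first-order term, and one ends up with a Gronwall-type inequality
\[
\partial_t\!\left(\int_\Omega e^{\lambda(x)}|\varphi|^2 + \varepsilon\!\int_{\partial\Omega} e^{\lambda}|\varphi|^2\right) \;\geq\; c(\varepsilon,\lambda)\left(\int_\Omega e^{\lambda}|\varphi|^2 + \varepsilon\!\int_{\partial\Omega} e^{\lambda}|\varphi|^2\right),
\]
with a rate $c(\varepsilon,\lambda)$ that one optimizes over the free parameter in $\lambda$. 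Completing the square in the cross term $\varepsilon\int \lambda'(x)|\varphi|^2$ versus $\varepsilon\int|\nabla\varphi|^2$ versus $\int\partial_n\varphi\,\varphi$ is what produces the exponent $(t_2-t_1-L)^2/(4\varepsilon(t_2-t_1))$: the shift by $L$ is the length of the interval over which the transport weight varies, the $1/\varepsilon$ reflects that the viscous term is what must be dominated, and the factor $(t_2-t_1)$ in the denominator comes from the length of the time window available to accumulate dissipation. So I would carry out the computation leaving the slope of $\lambda$ as a parameter $\mu$, obtain a rate of the form (something like) $\mu/\varepsilon$ minus a loss proportional to $\mu^2$, pick $\mu$ proportional to $(t_2-t_1-L)/(t_2-t_1)$, and integrate from $t_1$ to $t_2$; the endpoint weights $e^{\lambda}$ are comparable to constants depending only on $L$, but a careful choice should make them drop out exactly, leaving the clean stated constant.

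The main obstacle I anticipate is bookkeeping the \emph{boundary terms}: the weight $e^{\lambda(x)}$ evaluated at $x=0$ and $x=-L$ interacts with the Robin-type boundary conditions $\varepsilon(\varphi_t-\partial_\nu\varphi)=\varphi$ on $\Gamma_0$ and $\varphi_t-\partial_\nu\varphi=0$ on $\Gamma_1$, and one must check that these boundary contributions have the right sign (or can be absorbed) so that the differential inequality goes the correct way — recall the estimate is a \emph{lower} bound on $\|\varphi(t_2)\|_X$ by $\|\varphi(t_1)\|_X$, i.e.\ the $X$-norm grows backward in time, equivalently decays forward, which is consistent with $\mathcal{A}^*$ being dissipative but needs the sharp rate. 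A secondary technical point is justifying the manipulations on weak solutions; this is handled exactly as elsewhere in Section~1, approximating by strong solutions via density of $\mathcal{D}(\mathcal{A}^*)$ and passing to the limit in the (closed) inequality.
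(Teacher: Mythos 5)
Your strategy is essentially the paper's proof: the authors multiply by $\rho\varphi$ with $\rho(x)=\exp(\frac{r}{\varepsilon}x)$ (your $e^{\lambda(x)}$ with slope $\mu=r/\varepsilon$), check that the boundary terms combine with $\frac{d}{dt}(\varepsilon\int_{\partial\Omega}\rho|\varphi|^2)$ to give $\frac{d}{dt}\|\sqrt{\rho}\varphi\|_X^2\geq\frac{r(1-r)}{\varepsilon}\|\sqrt{\rho}\varphi\|_X^2$, apply Gronwall, and optimize $r=\frac{t_2-t_1-L}{2(t_2-t_1)}$, exactly as you outline. One small correction: the endpoint weights do not ``drop out'' — since the slope is of order $1/\varepsilon$, the bound $e^{-rL/\varepsilon}\leq\rho\leq 1$ costs a factor $e^{rL/\varepsilon}$, and it is precisely this $-rL$ loss in the exponent, balanced against the gain $r(1-r)(t_2-t_1)$, that produces the stated constant.
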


\begin{proof}
  We first consider a weight function $\rho (t, x) = \exp(\frac{r}{\varepsilon} x_n)$ for some constant $r \in(0,1)$ which will be fixed later. We will first treat the strong
  solutions case and, using a density argument, we will get the weak solutions
  case.

  We multiply the equation satisfied by $\varphi$ by $\rho \varphi$
  and we integrate on $\Omega$. We get the following identity:
  \[ \frac{1}{2} \frac{d}{dt} \left( \int_\Omega \rho | \varphi
     |^2 \right) =  -\frac{1}{2} \int_\Omega \rho \partial_{x_n}(| \varphi |^2)- \varepsilon \int_{\Omega}
     \rho \varphi \Delta \varphi . \]
  We then integrate by parts in space, which due to $\nabla \rho =  \frac{r}{\varepsilon} \rho e_n$, provides
  \[  -\frac{1}{2} \int_{\Omega} \rho \partial_{x_n} (|\varphi |^2) =  \frac{r}{2\varepsilon} \int_{\Omega} \rho | \varphi|^2 - \frac{1}{2} \left( \int_{\Gamma_0} \rho | \varphi |^2
     - \int_{\Gamma_1} \rho | \varphi |^2 \right), \]
  and
  \[  - \varepsilon \int_{\Omega} \rho \varphi \Delta \varphi =
     \varepsilon \int_{\Omega} \rho |\nabla \varphi|^2 + \frac{ r}{2}
     \int_{\Omega} \rho \partial_{x_n} ( | \varphi |^2) - \varepsilon \left( \int_{\Gamma_0} \rho
     \varphi \partial_{x_n} \varphi  - \int_{\Gamma_1} \rho \varphi \partial_{x_n} \varphi
     \right) \]
  \[ = \varepsilon \int_{\Omega} \rho |\nabla \varphi |^2 - \frac{
     r^2}{2\varepsilon} \int_{\Omega} \rho | \varphi |^2 + \frac{ r}{2} \left( \int_{\Gamma_0} \rho
     |\varphi|^2 - \int_{\Gamma_1} \rho |\varphi|^2
     \right) - \varepsilon \left( \int_{\Gamma_0} \rho
     \varphi \partial_{x_n}  \varphi- \int_{\Gamma_1} \rho \varphi \partial_{x_n} \varphi
     \right) . \]
  Using now the boundary conditions for $\varphi$ and summing up these identities,
  we finally get
  \[ \frac{d}{dt} \left( \int_{\Omega} \rho | \varphi |^2 \right) \geq
     \frac{r(1- r)}{\varepsilon} \int_{\Omega} \rho | \varphi |^2 + (1 -  r)
     \int_{\Gamma} \rho | \varphi |^2 - 2 \varepsilon \int_{\Gamma} \rho \varphi_t \varphi. \]
  On the other hand, it is straightforward that
  \[  \frac{d}{dt} \left( \varepsilon\int_{\Gamma} \rho | \varphi |^2 \right) =
     2 \varepsilon\int_{\Gamma} \rho \varphi_t \varphi, \]
  and, consequently, using that $r \in(0,1)$,
  we have obtained
  \[ \frac{d}{dt} \left( \| \sqrt{\rho (.)} \varphi (t)\|^2_X \right) \geq
     \frac{r(1- r)}{\varepsilon} \| \sqrt{\rho (.)} \varphi
     (t)\|^2_X. \]
  Gronwall's lemma combined with $\exp(-\frac{r}{\varepsilon} L) \leqslant \rho (\cdot) \leqslant 1$ successively gives
  \[ \| \sqrt{\rho (\cdot)} \varphi(\cdot)\|^2_X \leqslant \exp \left( -\frac{r(1- r)}{\varepsilon} (t_2 - t_1)\right) \| \sqrt{\rho (\cdot)} \varphi (t_2)\|^2_X \]
  and
  \[ \| \varphi (t_1)\|^2_X \leqslant \exp \left( - \frac{1}{\varepsilon}\left( r(1- r)(t_2 - t_1)-r L\right)  \right) \| \varphi (t_2)\|^2_X \]
  We finally choose
  $$
  r:=\frac{t_2-t_1-L}{2(t_2-t_1)} \in(0,1),
  $$
  which gives the result.
\end{proof}

We will now use this dissipation estimate with our Carleman inequality to get
the desired result.

\begin{proposition}
  If $n=1$, $\frac{T}{L}$ is sufficiently large and $\varepsilon$ sufficiently small,
  then the observability constant $C_{obs}(\varepsilon)$ is bounded by
  \[ \left. C \exp \left( - \frac{k}{\varepsilon} \right) \right. \]
  where $C$, $k$ \ are some positive constants.
\end{proposition}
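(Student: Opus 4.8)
The plan is to combine the Carleman estimate of Theorem~\ref{The:Carleman} with the dissipation estimate of Proposition~\ref{dissi} in order to derive the observability inequality \eqref{OI} with a constant that decays exponentially in $1/\varepsilon$. The starting point is the left-hand side of \eqref{Carlemana}: by restricting the space integral to a subinterval of $(0,T)$ around $t=T/2$ and the $x$-integral to all of $(-L,0)$, the distributed term $s^3\int \phi^3 e^{-2s\alpha}|\varphi|^2$ controls $\|\varphi(t,\cdot)\|_{L^2(\Omega)}^2$ on a time subinterval, and the boundary term $s^3\int_{(0,T)\times\{0,-L\}}\phi^3 e^{-2s\alpha}|\varphi|^2$ controls $\varepsilon\|\varphi(t,\cdot)\|_{L^2(\partial\Omega)}^2$ there. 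Hence, for $t$ in a fixed subinterval $(T/4,3T/4)$ say, one gets $\|\varphi(t)\|_X^2 \lesssim$ (right-hand side of \eqref{Carlemana}). The key quantitative point is to track the size of the weights: on $(T/4,3T/4)$ the weights $\phi,\alpha$ are bounded above and below by quantities of order $1/\varepsilon^2$ (times $T$-dependent factors), so $e^{-2s\alpha}$ on the left is bounded below by something like $\exp(-cs/\varepsilon^2)$, while the specific exponent $e^{-4s\alpha+2s\alpha(\cdot,-L)}$ appearing on the right at $x=0$ is \emph{negative definite} in the exponent for a suitable choice of the constant $C$ in the definition of $\eta$ — this is precisely the gain that makes the right-hand side small.

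**Next I would** choose the Carleman parameter $s$ at its minimal admissible value, $s \sim s_0(\varepsilon T + \varepsilon^2 T^2)$. With this choice, $s/\varepsilon^2 \sim s_0(T/\varepsilon + T^2)$, which stays bounded (in $1/\varepsilon$) for fixed $T$; more importantly the combination $-4s\alpha + 2s\alpha(\cdot,-L)$ at $x=0$, divided through, produces an exponent of the form $-\kappa s/\varepsilon^2 \sim -\kappa s_0(T/\varepsilon + T^2)$ for some $\kappa>0$ depending on $L$ and on the gap in $\eta$ between $x=0$ and $x=-L$. Thus from \eqref{Carlemana} one obtains, for any $t_1\in(T/4,3T/4)$,
\[
\|\varphi(t_1)\|_X^2 \;\leq\; C(T)\,e^{-k/\varepsilon}\,\|\varphi\|_{L^2((0,T)\times\Gamma_0)}^2,
\]
where I have absorbed the polynomial powers of $s$ (which are powers of $1/\varepsilon$ and $T$) into the exponential at the cost of enlarging $C(T)$ slightly and shrinking $k$; here $k>0$ requires that the $T$-dependent prefactor in the admissible range of $s$ does not overwhelm the gain, which is why the hypothesis $T/L-1$ large is needed — it guarantees the weight gap is exploited over a long enough time and that $T$ itself is comparable to $L$ so that the $\varepsilon T^2$ contributions are subdominant.

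**Then I would** propagate this bound from the interior time $t_1$ down to $t=0$ using the dissipation estimate. Proposition~\ref{dissi} with $t_2 = t_1 \in(T/4,3T/4)$ and $t_1$ replaced by $0$ gives $\|\varphi(0)\|_X \leq \exp\{-(t_1-L)^2/(4\varepsilon t_1)\}\,\|\varphi(t_1)\|_X \leq \|\varphi(t_1)\|_X$ as soon as $t_1>L$, which holds once $T/L-1$ is large (so that $T/4>L$). Combining with the previous display yields
\[
\|\varphi(0)\|_X^2 \;\leq\; C(T)\,e^{-k/\varepsilon}\,\|\varphi\|_{L^2((0,T)\times\Gamma_0)}^2,
\]
i.e. $C_{obs}(\varepsilon) \leq C(T)^{1/2} e^{-k/(2\varepsilon)}$, which is the claim (renaming constants, and noting the statement's $C_T(\varepsilon)$ is our $C_{obs}(\varepsilon)$). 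One small technical point: the Carleman weights blow up as $t\to 0,T$, so one should first fix the observation on $(T/4,3T/4)$ as above and only afterwards invoke dissipation, rather than trying to use the Carleman estimate down to $t=0$ directly.

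**The main obstacle** I expect is the bookkeeping in the second paragraph: one must verify that with $s$ at the bottom of its admissible window, the \emph{negative} part of the exponent on the right-hand side of \eqref{Carlemana} (coming from $-4s\alpha+2s\alpha(\cdot,-L)$ and the choice $C>e^{2L}$) genuinely dominates both the \emph{positive} contribution of $e^{-2s\alpha}$ extracted from the left-hand side and the polynomial factors $s^9$, $\phi^9$, uniformly on the chosen time window, and that the leftover exponent is $\leq -k/\varepsilon$ for some $k>0$ — this is where the smallness of $\varepsilon$ and the largeness of $T/L-1$ both enter, and it requires carefully comparing $\alpha(\cdot,0)$ with $\alpha(\cdot,-L)$ using $\eta(0)-\eta(-L)=L$ and the explicit form $\alpha = (C-e^\eta)/(\varepsilon^2 t(T-t))$. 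Everything else (restricting integration domains, absorbing polynomial powers of $1/\varepsilon$ into a weakened exponential, the final application of Proposition~\ref{dissi}) is routine.
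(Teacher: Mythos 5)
There is a genuine gap, and it sits exactly at the heart of your second paragraph: the claim that the Carleman estimate \eqref{Carlemana} \emph{by itself} yields $\|\varphi(t_1)\|_X^2\leq C(T)e^{-k/\varepsilon}\|\varphi\|^2_{L^2((0,T)\times\Gamma_0)}$ is false, for any admissible choice of the constant $C>e^{2L}$ in the weight. Write $m=C-e^{2L}$ and $M=C-e^{L}$, so that $\alpha(t,0)=m/(\varepsilon^2t(T-t))$ and $\alpha(t,-L)=M/(\varepsilon^2t(T-t))$. The observation weight on the right of \eqref{Carlemana} is $e^{-4s\alpha(\cdot,0)+2s\alpha(\cdot,-L)}$, while the weight you must divide by on the left (say on the boundary term at $x=0$) is $e^{-2s\alpha(\cdot,0)}$; their ratio at each time $t$ is
\[
e^{-2s\alpha(t,0)+2s\alpha(t,-L)}=\exp\Bigl(\frac{2s(M-m)}{\varepsilon^2t(T-t)}\Bigr),\qquad M-m=e^{2L}-e^{L}>0,
\]
which is always $>1$ and is independent of the choice of $C$. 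So no tuning of $C$ makes the net exponent ``negative definite'': after dividing through, the Carleman step produces an observability-type inequality on $(T/4,3T/4)$ with an exponentially \emph{large} constant of order $\exp(4s(M-m)/(\varepsilon T)^2)\sim e^{c'/\varepsilon}$ for $s\sim\varepsilon T+(\varepsilon T)^2$ (this is exactly what the paper computes). Your proposed bound with $e^{-k/\varepsilon}$ on the right of that intermediate step cannot be obtained this way.

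The second half of the gap is that you then use Proposition~\ref{dissi} only through the crude bound $\exp\{-(t_1-L)^2/(4\varepsilon t_1)\}\leq 1$, i.e.\ you discard precisely the term that makes the proposition true. The paper's mechanism is the opposite of yours: the Carleman estimate loses a factor $e^{c'/\varepsilon}$, and the dissipation estimate, kept quantitatively, turns $\int_{T/4}^{3T/4}\|\varphi(t)\|_X^2\,dt\geq \frac{T}{2}\exp\bigl((T-4L)^2/(8\varepsilon T)\bigr)\|\varphi(0)\|_X^2$ into a gain $e^{+K/\varepsilon}$ with $K$ growing with $T$; the hypothesis that $T/L-1$ is large is there so that $K>c'$, and the final constant is $e^{-(K-c')/\varepsilon}$. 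So the overall architecture (Carleman on a central time interval, then dissipation back to $t=0$) matches the paper, but you have attributed the exponential smallness to the wrong ingredient, and as written the argument does not prove the statement. To repair it, carry the positive exponent $4s(M-m)/(\varepsilon T)^2$ honestly through the Carleman step and retain the exponential factor in Proposition~\ref{dissi} when passing to $t=0$.
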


\begin{proof}
  \begin{itemize}
    \item We begin with estimating both sides of the Carleman inequality obtained
    above. We use the same notations as above and we define $m = \lambda - e^{2L}$ and $M
    = \lambda-e^{L}$. We first get
    \[ s^9  \int_0^T e^{-4s\alpha(.,0)+2s\alpha(.,-L)}  (\phi^9 |\varphi |^2)(., 0) \lesssim s^7 (\varepsilon T)^{- 14} \exp \left( \frac{s(2M-4m)
       }{(\varepsilon T)^2} \right) \int_0^T | \varphi |^2 (., 0) . \]
    On the other hand, using that $\displaystyle{\phi \gtrsim \frac{1}{(\varepsilon T)^2}}$ on $\left. [
    \frac{T}{4}, \frac{3 T}{4} \right]$, we have the following estimate from below for the left hand-side of the Carleman inequality \eqref{Carleman}
    \[ \frac{s^3}{(\varepsilon T)^6} \exp
       \left( - \frac{2sM}{(\varepsilon T)^2} \right) \left(\int_{\frac{T}{4}}^{\frac{3
       T}{4}} \int_{- L}^0 | \varphi |^2 + \int_{\frac{T}{4}}^{\frac{3 T}{4}}
       \int_{\left. \{- L, 0 \right\}} | \varphi |^2\right). \]
   Consequently we get that
    \[  \| \varphi \|^2_{L^2((T/4,3T/4); X)}
       \lesssim s^4 (\varepsilon T)^{- 8} \exp \left( \frac{4s(M-m)
       }{(\varepsilon T)^2} \right) \int_0^T | \varphi |^2 (., 0):=C \int_0^T | \varphi |^2 (., 0). \]
    We now choose $s \backsim (\varepsilon T)^2 + (\varepsilon T)$. The above constant $C$ is
    consequently estimated by
    \[ \varepsilon^{-4} e^{c / \varepsilon} \lesssim e^{c' / \varepsilon} \]
    for $c' > c$ and $c$ well-chosen.

    \item We now deduce the result using dissipation estimates. We have just proven
    \[ \| \varphi \|^2_{L^2((T/4,3T/4); X)} \lesssim e^{\frac{c'}{\varepsilon}} \int_0^T \int_{\Gamma_0}
       | \varphi |^2 . \]
    We now use the dissipation property with $t_1 = 0$ and $ t_2 = t \in
    \left] \frac{T}{4}, \frac{3 T}{4} \right[$. We easily get, provided $T > 4 L$,
    \[ \frac{T}{2} \exp \left( \frac{(T
       - 4 L)^2}{8 \varepsilon T}  \right) \| \varphi (0) \|^2_X \leqslant \| \varphi \|^2_{L^2((T/4,3T/4); X)}, \]
    which gives the result with $\displaystyle{k = \frac{1}{8} \left( 1-\frac{4L}{T}\right) (T-4L) - c'>0}$ provided that $\displaystyle{\frac{T}{L}>8+32 c'}$.
  \end{itemize}
\end{proof}

\subsection{Proof of Theorem 1}

To show the controllability result for $(S_v)$, we will adopt some minimization
strategy inspired by the classical heat equation.

\begin{proposition}\label{eq}
  A necessary and sufficient condition for the solution of problem $(S_v)$
  to satisfy $u (T) = 0$ is given by:
  \[ \forall \varphi_T \in \text{$X$}, < \varphi (0,.), u_0 >_X = \int_0^T \varphi(.,0) v. \]
  where $\varphi$ is the \ solution of problem $(S')$ with final value
  $\varphi_T$.
\end{proposition}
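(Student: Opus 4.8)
The proof is a standard transposition (duality) argument, and the plan is to use the weak formulation of $(S_v)$ — i.e. the Definition preceding Proposition~\ref{NH} — applied with $f=0$, $g_0=v$, $g_1=0$, so that $(S_v)=(S_{0,v,0})$ with $g=v$ on $\Gamma_0$ and $g=0$ on $\Gamma_1$, and to insert as test function $\varphi$ the solution of the adjoint problem $(S')$ with terminal datum $\varphi_T$.

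First I would observe that choosing $\varphi$ to be a solution of $(S')$ annihilates the three terms on the left-hand side of that weak identity which involve the adjoint operator and the adjoint boundary conditions: the interior integral vanishes because $\varphi_t+\partial_n\varphi+\varepsilon\Delta\varphi=0$; the $\Gamma_0$ contribution vanishes because $\varepsilon(\varphi_t-\partial_\nu\varphi)-\varphi=0$ there; and the $\Gamma_1$ contribution vanishes because $\varphi_t-\partial_\nu\varphi=0$ while $1_{\Gamma_0}$ is zero on $\Gamma_1$. Since $f=0$ as well, what remains, after recognizing that $\int_\Omega ab+\varepsilon\int_{\partial\Omega}ab=<a,b>_X$ and using $\varphi(T)=\varphi_T$, is precisely a relation linking $<u(T),\varphi_T>_X$, $<\varphi(0),u_0>_X$ and $\int_0^T\int_{\Gamma_0}\varphi v$. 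Because $u\in\mathcal{C}([0,T],X)$ (so $u(T)\in X$) and $\varphi(T)=\varphi_T$ ranges over all of $X$ as $\varphi_T$ does, one has $u(T)\equiv 0$ if and only if $<u(T),\varphi_T>_X=0$ for every $\varphi_T\in X$, which by the relation just obtained is exactly the asserted identity $<\varphi(0),u_0>_X=\int_0^T\int_{\Gamma_0}\varphi v$; this gives both implications at once.

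The one point requiring care — and the only real obstacle — is that the test functions admitted in the Definition must lie in $\mathcal{C}([0,T],H^2(\Omega))\cap\mathcal{C}^1([0,T],X)$, whereas a general weak solution of $(S')$ is only known to belong to $\mathcal{C}([0,T],X)$. I would therefore first carry out the computation for strong solutions, i.e. for $\varphi_T\in\mathcal{D}(\mathcal{A}^{\ast})$, for which $\varphi\in\mathcal{C}([0,T],\mathcal{D}(\mathcal{A}^{\ast}))\cap\mathcal{C}^1([0,T],X)$ is an admissible test function (by the $H^2$-type regularity of $\mathcal{D}(\mathcal{A}^{\ast})$, cf. Lemma~\ref{RE} and Remark~\ref{remarqueH2}), thereby establishing the identity for such $\varphi_T$. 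Then, using the density of $\mathcal{D}(\mathcal{A}^{\ast})$ in $X$ together with the continuity of the maps $\varphi_T\mapsto\varphi(0)\in X$ (backward-semigroup estimate) and $\varphi_T\mapsto\varphi|_{(0,T)\times\Gamma_0}\in L^2((0,T)\times\Gamma_0)$ (the $L^2((0,T),H^1(\Omega))$ bound of Lemma~\ref{RE}/Remark~\ref{remarqueH2} composed with the trace theorem), both sides of the identity depend continuously on $\varphi_T\in X$, so I would pass to the limit and extend it to every $\varphi_T\in X$. Everything else is the bookkeeping of integrations by parts already encoded in the Definition.
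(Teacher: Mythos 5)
Your proposal is correct and follows essentially the same route as the paper: test the weak formulation of $(S_v)$ against adjoint solutions, first for strong solutions $\varphi_T\in\mathcal{D}(\mathcal{A}^{\ast})$ and then pass to general $\varphi_T\in X$ by density and continuity. You simply spell out more explicitly the cancellation of the adjoint boundary terms and the limiting argument that the paper compresses into one line.
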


\begin{proof}
  We apply the Definition \ref{Ladefinition} to $u$ against $\varphi$ a strong
  solution of $(S')$, which gives
  \[  \int_0^T \varphi(.,0) v=< \varphi (0,.), u_0 >_X-<\varphi_T,u(T,.)>_X\]
  and we get the desired equivalence by approximation of weak solutions by
  strong solutions.
\end{proof}

\begin{proposition}\label{ObsControl}
  The following properties are equivalent
  \begin{itemizedot}
    \item $\exists C_1>0,  \forall \varphi_T \in X_{}$; $\|\varphi (0,.)
    \|_X \leq C_1 \| \varphi(.,0) \|_{L^2 (0, T)}$ where
    $\varphi$ is the solution of problem $(S'),$

    \item $\exists C_2>0,  \forall u_0 \in X, \exists v \in L^2(0, T)$ such that $\| v \|_{L^2
    (0, T) } \leqslant C_2 \| u_0 \|_X$ and \
    the solution $u$ of problem $(S_v)$satisfies $u (T) = 0$.
  \end{itemizedot}
  Moreover, $C_1 = C_2$.
\end{proposition}

\begin{proof}
  $(\Rightarrow)$ Let $u_0 \in \text{$X$}$. We define $H$ as the closure of
  $X$ for the norm defined by
  \[ \left. \| \varphi_T \right\|_H = \| \varphi(.,0)\|_{L^2(0,T)}, \]
  where $\varphi$ is the corresponding solution of $(S')$. Using the observability
  assumption and backward uniqueness (Proposition \ref{BU}), one sees that it is
  indeed a norm on $X$.

  We define a functional $J$ in the following way
  \[ J (\varphi_T) = \frac{1}{2} \int_0^T \varphi^2 (.,0) - < \varphi (0,.), u_0 >_X . \]
  $J$ is clearly convex and our assumption imply that $J$ is continuous on $H$. Moreover, thanks to our
  observability assumption, $J$ is coercive. Indeed, one has:
  \[ J (\varphi_T) \geqslant \frac{1}{2} \text{$\| \varphi_T \|^2_H$} - C\|
     \varphi_T \|_H \]
  for $\varphi_T \in H.$
 \par
 \noindent Thus $J$ possesses a global minimum $\widehat{\varphi}_T \in H$, which gives,
  writing Euler-Lagrange equations,
  \begin{equation}\label{Torres}
    \forall \varphi_T \in H, \int_0^T \varphi(.,0) \hat{\varphi}(.,0) =< \varphi (0,.), u_0 >_X .
  \end{equation}
  According to Proposition \ref{eq}, we have shown the existence \ of an
  admissible control defined by $v = \hat{\varphi}(.,0)$.
   Moreover, choosing $\varphi_T = \widehat{\varphi}_T$ in \eqref{Torres}, we obtain the
  following estimate
  \[ \| v \|_{L^2(0,T)} \leqslant \|u_0\|_X \|\hat{\varphi}(0,.)\|_X. \]
  Using our hypothesis, we are done.

  $(\Leftarrow)$ If $v$ is an admissible control with continuous dependence on
  $u_0$, Proposition \ref{eq} gives us, for every $\varphi_T \in X$,
  \[ < \varphi (0,.), u_0 >_X = \int_0^T \varphi(.,0) v. \]
  Choosing now $u_0 = \varphi (0)$ gives us the estimate
  \begin{equation*}
     \| \varphi (0,.) \|^2_X \leqslant C_2 \| \varphi_T
    \|_H  \| u_0 \|_X
  \end{equation*}
  that is
  \[ \| \varphi (0,.) \|_X \leqslant C_2 \| \varphi_T
     \|_H. \]
\end{proof}

\section*{Conclusion and open problems}

The question of controllability in higher dimension is open and seems to be hard to obtain using Carleman estimates.
Another approach to obtain the observability estimate in the $n$-dimensional case could be similar to the one of Miller (see \cite[Theorem 1.5]{M})
but it seems that our problem is not adapted to that framework  (we can check that the link between the one-dimensional problem and the general one is not as simple as it may seem, see Appendix A below).

\par If $n=1$, we can wonder what is the minimal time to get a vanishing control cost
when the viscosity goes to zero. The intuitive result would be $L$, but it seems
that Carleman estimates cannot give such a result.

\appendix
\section{On Miller's trick}
In this paragraph, we detail why we cannot apply Miller's method to deduce observability in dimension $n$ from the one in dimension 1. We refer the reader to the abstract setting of \cite[Section 2]{M} (see more specifically Lemma 2.2 in that reference).
\\We denote by $\Omega^1$ or $\Omega^n$ the domains (resp. $X^1$ or $X^n$ the function spaces,  $\mathcal{A}^1$ or $\mathcal{A}^n $ the evolution operators) in dimension $1$ or $n$. The observability operator in dimension one is defined by
 \[ \mathcal{O}^1:
 \begin{array}{l}
     \varphi \longmapsto \varphi 1_{\{x=0\}}\\
     X^1 \rightarrow L^2 (\partial \Omega^1)
   \end{array}.\]
where $1_{\{x=0\}}$ is the characteristic function of ${\{x=0\}}$.
\\With similar notations, it is obvious that the observability operator on the part of the boundary $\Gamma_0=\mathbb{R}^{n-1} \times \{0\}$ is given by
\[ \mathcal{O}^n:
\begin{array}{l}
     \varphi \longmapsto \varphi 1_{\Gamma_0}\\
     X^n \rightarrow L^2 (\partial \Omega^n)
   \end{array} \]
and coincides with the tensorial product $I\otimes \mathcal{O}^1$ between the identity of $L^2(\mathbb{R}^{n - 1})$ and $\mathcal{O}^1$.

\noindent We have now the following result.
\begin{proposition}
We define the natural ``difference'' between $\mathcal{A}^n $ and $\mathcal{A}^1$ by
\[\mathcal{B} : w \in  L^2 (\mathbb{R}^{n - 1})\mapsto \varepsilon \Delta' w \in L^2 (\mathbb{R}^{n - 1})\]
($\Delta'$ denotes the standard Laplacian on $\mathbb{R}^{n -1}$) with domain
\[\mathcal{D}(\mathcal{B})=H^2 (\mathbb{R}^{n - 1}). \]
Then $\mathcal{A}^n $ does not extend $\mathcal{A}^1\otimes I + I \otimes \mathcal{B}$.
\end{proposition}

\begin{proof}
We compute for  $u_1 \in \mathcal{D} (\mathcal{A}^1)$, $w \in \mathcal{D} (\mathcal{B})$ and $\varphi_1\in X^1$, $\phi \in L^2 (\mathbb{R}^{n - 1})$ the scalar product $<(\mathcal{A}^1 \otimes I+I \otimes \mathcal{B})(u_1 \otimes w),\varphi_1\otimes \phi>_{X^n}$. This value is successively
\[ \begin{array}{l}
<\mathcal{A}^1 u_1,\varphi_1>_{X^1}<w,\phi>_{L^2(\mathbb{R}^{n - 1})}+<u_1,\varphi_1>_{X^1}<Bw, \phi>_{L^2(\mathbb{R}^{n - 1})}\\
\\
=\left(\int_{\Omega^1} ((\varepsilon \partial_{x_n}^2-\partial_{x_n})u_1)\varphi_1  -\int_{\partial\Omega^1} ((\partial_{\nu} +1_{\{x=-L\}}) u_1)\varphi_1\right) \int_{\mathbb{R}^{n - 1}}w \phi \\
\\
+\left(\int_{\Omega^1} u_1 \varphi_1+\varepsilon \int_{\partial\Omega^1} u_1 \varphi_1\right)\varepsilon \int_{\mathbb{R}^{n - 1}} (\Delta'\phi) w\\
\\
=<\mathcal{A}^n(u_1\otimes w),\varphi_1 \otimes \phi>_{X^n}+\varepsilon^2 \int_{\partial\Omega^n} (\Delta'(u_1\otimes w))\varphi_1 \otimes \phi
\end{array}\]
and our claim is proved.
\end{proof}

\end{document}